\newtheorem{theorem}{Theorem}[section]
\newtheorem{corollary}{Corollary}[section]
\newtheorem{lemma}{Lemma}[section]
\newtheorem{proposition}{Proposition}[section]
\theoremstyle{definition}
\newtheorem{definition}{Definition}[section]
\theoremstyle{remark}
\newtheorem{remark}{Remark}[section]
\numberwithin{equation}{section}
\newcommand{\ov}{\overline}
\newcommand{\e}{\varepsilon}
\renewcommand{\O}{\Omega}
\renewcommand{\vec}[1]{\mathbf{#1}}
\newcommand{\field}[1]{\mathbb{#1}}
\newcommand{\R}{\field{R}}
\newcommand{\er}{\eqref}
\DeclareMathOperator{\Div}{div} \DeclareMathOperator{\dist}{dist}
\DeclareMathOperator{\supp}{supp}
\renewcommand{\O}{\Omega}
\newcommand{\f}{\varphi}
\renewcommand{\vec}[1]{\boldsymbol{#1}}
\DeclareMathOperator{\essinf}{ess\,inf}
\date{}
\begin{document}
\title{
Asymptotic behavior of the $W^{1/q,q}$-norm of mollified $BV$ functions
and applications to singular perturbation problems
} \maketitle
\begin{center}
\textsc{Arkady Poliakovsky \footnote{E-mail:
poliakov@math.bgu.ac.il}
}\\[3mm]
Department of Mathematics, Ben Gurion University of the Negev,\\
P.O.B. 653, Be'er Sheva 84105, Israel
\\[2mm]
\end{center}

\begin{abstract}
Motivated by results of Figalli and Jerison \cite{AFDJ} and
Hern\'{a}ndez \cite{FHHalp}, we prove the following formula:
\begin{equation*}
    \lim_{\e\to
        0^+}\frac{1}{|\ln{\e}|}\big\|\eta_\e*u\big\|^q_{W^{1/q,q}(\Omega)}=
    C_0\int_{J_u}\Big|u^+(x)-u^-(x)\Big|^qd\mathcal{H}^{N-1}(x),
\end{equation*}
where $\Omega\subset\R^N$ is a regular domain, $u\in BV(\Omega)\cap
L^\infty$, $q>1$ and
 $\eta_\e(z)=\e^{-N}\eta(z/\e)$ is a smooth mollifier.
In addition, we  apply the above formula
to the study of certain singular perturbation problems.
\end{abstract}

\section{Introduction}
Figalli and Jerison found in \cite{AFDJ} a relationship between the
perimeter of a set and a fractional Sobolev norm of its
characteristic function. More precisely, for the mollifying kernel
$\eta_\e(z)=\e^{-N}\eta(z/\e)$, where  $\eta(z)$ denotes
the standard Gaussian in $\R^N$,   they showed that there exist
constants $C_1>0$ and $C_2>0$ such that for every set $A\subset\R^N$ of finite
perimeter $P(A)$ we have
\begin{equation}\label{hjhjjggjjjkhkhjjhjh}
C_1P(A)\leq\liminf_{\e\to
0^+}\frac{1}{|\ln{\e}|}\big\|\eta_\e*\chi_A\big\|^2_{H^{1/2}(\R^N)}\leq\limsup_{\e\to
0^+}\frac{1}{|\ln{\e}|}\big\|\eta_\e*\chi_A\big\|^2_{H^{1/2}(\R^N)}\leq
C_2P(A),
\end{equation}
where  $\chi_A$ is the characteristic function of  $A$. More recently, Hern\'{a}ndez
improved this result in \cite{FHHalp} as follows. For $\eta_\e$ as above
 he
showed that there exist a constant $C_0>0$ such that for every $u\in
BV(\R^N)\cap L^\infty$ we have
\begin{equation}\label{hjhjjggjjjkhkhjjhjhhgghjhjh}
\lim_{\e\to
0^+}\frac{1}{|\ln{\e}|}\big\|\eta_\e*u\big\|^2_{H^{1/2}(\R^N)}=
C_0\int_{J_u}\Big|u^+(x)-u^-(x)\Big|^2d\mathcal{H}^{N-1}(x).
\end{equation}
A related result in which the same R.H.S.~as in
\er{hjhjjggjjjkhkhjjhjhhgghjhjh} appears, was obtained  in
\cite{pol}. More precisely, we showed in \cite{pol} that for every radial $\eta\in
C^\infty_c(\R^N,\R)$ there exists a constant $C=C_\eta>0$ such that for
every $u\in BV(\Omega,\R^d)\cap L^\infty$ we have
\begin{equation}\label{hjhjjggjjjkhkhjjhjhhgghjhjhljkjljl}
\lim_{\e\to 0^+}\e\big\|\eta_\e*u\big\|^2_{H^{1}(\Omega)}=
C_\eta\int_{J_u}\Big|u^+(x)-u^-(x)\Big|^2d\mathcal{H}^{N-1}(x).
\end{equation}
More recently, we showed in
\cite{PJB} yet another related result:
\begin{theorem}\label{ghgghgghjjkjkzzbvqredint}
Let $\Omega\subset\R^N$ be an open set with bounded Lipschitz
boundary and let $u\in BV(\Omega,\R^d)\cap L^\infty(\Omega,\R^d)$.
Then, for every $q>1$ we have
\begin{equation}\label{fgyufghfghjgghgjkhkkGHGHKKjjjjkjkkjkmmlmjijiluuizziihhhjbvqKKredint}
\lim\limits_{\e\to
0^+}\int\limits_\Omega\int\limits_{B_\e(x)\cap\Omega}\frac{1}{\e^N}\,\frac{\big|u(
y)-u(x)\big|^q}{|y-x|}dydx=C_N\int_{J_u}\Big|u^+(x)-u^-(x)\Big|^qd\mathcal{H}^{N-1}(x),
\end{equation}
with the dimensional constant $C_N>0$ defined by
\begin{equation}\label{fgyufghfghjgghgjkhkkGHGHKKggkhhjoozzbvqkkint}
C_N:=\frac{1}{N}\int_{S^{N-1}}|z_1|d\mathcal{H}^{N-1}(z)\,,
\end{equation}
where we denote $z:=(z_1,\ldots, z_N)\in\R^N$.
\end{theorem}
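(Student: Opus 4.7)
My plan is to rescale $y = x + \e z$ to move the $\e$-dependence outside, obtain a $z$-uniform dominating bound, and reduce the $\e \to 0^+$ pointwise limit in $z$ to a one-dimensional $BV$ computation via slicing.

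After the substitution $y = x + \e z$, the left-hand side becomes
\begin{equation*}
I_\e := \int_{B_1(0)} \frac{1}{\e|z|} \int_{\Omega\cap(\Omega-\e z)} \big|u(x+\e z) - u(x)\big|^q \, dx \, dz.
\end{equation*}
The elementary inequality $|u(x+\e z)-u(x)|^q \le (2\|u\|_\infty)^{q-1}\,|u(x+\e z)-u(x)|$ together with the standard $BV$ translation bound $\int_{\Omega\cap(\Omega-h)} |u(x+h) - u(x)|\,dx \le |h|\,|Du|(\Omega)$ yields
\begin{equation*}
\frac{1}{\e|z|} \int_{\Omega\cap(\Omega-\e z)} \big|u(x+\e z) - u(x)\big|^q \, dx \le (2\|u\|_\infty)^{q-1}\,|Du|(\Omega),
\end{equation*}
an $\e$- and $z$-independent bound that is integrable over $B_1(0)$; this will support dominated convergence in $z$ once the pointwise limit is established.

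For fixed $z \in B_1\setminus\{0\}$ set $\nu := z/|z|$ and $t := \e|z|$, and slice along lines parallel to $\nu$: writing $u_y(s) := u(y+s\nu)$ on the slice through $y \in \nu^\perp$, Fubini recasts the inner integral as $t^{-1}\int_{\nu^\perp}\int_\R |u_y(s+t)-u_y(s)|^q\,ds\,d\mathcal{H}^{N-1}(y)$. The technical heart is the one-dimensional lemma: for $f \in BV(\R;\R^d)\cap L^\infty$ and $q > 1$,
\begin{equation*}
\lim_{t \to 0^+} \frac{1}{t} \int_\R \big|f(s+t)-f(s)\big|^q \, ds = \sum_{\sigma \in J_f} \big|f(\sigma^+)-f(\sigma^-)\big|^q.
\end{equation*}
For the direction $\ge$, I isolate finitely many jumps $\sigma_1,\dots,\sigma_M$ and observe that for $s \in (\sigma_i-t,\sigma_i)$ the continuous part of $f$ varies by $o(1)$ as $t\to 0$, so $|f(s+t)-f(s)| \to |f(\sigma_i^+)-f(\sigma_i^-)|$ there, and Fatou plus letting $M \to \infty$ delivers the sum. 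For $\le$, I use $|f(s+t)-f(s)|^q \le |f(s+t)-f(s)|^{q-1}\,|Df|\big((s,s+t]\big)$ and swap order via Fubini to obtain
\begin{equation*}
\frac{1}{t} \int_\R |f(s+t)-f(s)|^q \, ds \le \int_\R \big|Df\big|\big((\sigma-t,\sigma+t)\big)^{q-1} \, d|Df|(\sigma);
\end{equation*}
as $t \to 0^+$ the integrand converges to $|f(\sigma^+)-f(\sigma^-)|^{q-1}$ at atoms of $|Df|$ and to $0$ on its diffuse (absolutely continuous plus Cantor) part, so dominated convergence yields exactly $\sum_{\sigma \in J_f}|f(\sigma^+)-f(\sigma^-)|^q$. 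Coupling this lemma with the Ambrosio--Fusco--Pallara $BV$ slicing (which identifies $J_{u_y}$ with $(J_u)_y$ and ties the jump heights to $|u^+-u^-|$ on $J_u$ with coarea weight $|\nu \cdot \nu_u|$) then gives
\begin{equation*}
\lim_{\e \to 0^+}\frac{1}{\e|z|} \int_{\Omega\cap(\Omega-\e z)} \big|u(x+\e z) - u(x)\big|^q \, dx = \frac{1}{|z|}\int_{J_u} \big|u^+-u^-\big|^q\,|z \cdot \nu_u|\, d\mathcal{H}^{N-1}.
\end{equation*}

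Dominated convergence in $z$ on $B_1(0)$, using the uniform bound above together with this pointwise limit, yields
\begin{equation*}
\lim_{\e \to 0^+} I_\e = \int_{J_u} \big|u^+-u^-\big|^q \left( \int_{B_1(0)} \frac{|z \cdot \nu_u(x)|}{|z|}\, dz \right) d\mathcal{H}^{N-1}(x).
\end{equation*}
Passing to polar coordinates $z = r\omega$ evaluates the bracketed integral as $\frac{1}{N}\int_{S^{N-1}}|\omega \cdot \nu_u(x)|\,d\mathcal{H}^{N-1}(\omega)$, which by rotational invariance of $S^{N-1}$ equals $\frac{1}{N}\int_{S^{N-1}}|\omega_1|\,d\mathcal{H}^{N-1} = C_N$, producing the claimed identity. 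I expect the main obstacle to be the one-dimensional lemma, specifically the vanishing contribution of the Cantor part of $|Df|$: one must verify that $|Df|\big((\sigma-t,\sigma+t)\big)^{q-1} \to 0$ on a set of full $|D^c f|$-measure, which is precisely where the hypothesis $q > 1$ becomes essential, since at $q = 1$ both the absolutely continuous and Cantor parts would contribute nontrivially.
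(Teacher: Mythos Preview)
This theorem is quoted in the paper from the author's earlier work \cite{PJB} and is not re-proved in the main body; however, the LaTeX source carries (after \verb|\end{document}|) what is evidently the proof from that reference, namely Proposition~\texttt{hgugghghhffhfhKKzzbvq} and Theorem~\texttt{ghgghgghjjkjkzzbvq}. Comparing against that material, your argument is correct but follows a genuinely different route.

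The paper establishes the directional identity
\[
\lim_{t\to 0^+}\frac{1}{t}\int_K W\big(u(x+t\vec k),u(x)\big)\,dx
=\int_{J_u\cap K} W\big(u^+,u^-\big)\,|\vec k\cdot\vec\nu|\,d\mathcal{H}^{N-1}
\]
for general symmetric $C^1$ nonlinearities $W$ with $W(a,a)=0$, by mollifying $u$, writing the difference quotient as an integral against $Du$ via integration by parts, and then analyzing the limit pointwise on and off $J_u$ using approximate continuity and the one-sided trace structure (Theorem~3.108 in \cite{amb}). The final integration over $z\in B_1(0)$ with dominated convergence is then identical to yours.

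Your route instead slices along $\nu=z/|z|$ and reduces the directional identity to a purely one-dimensional computation. Your 1D lemma is clean: the lower bound via Fatou on finitely many jumps, and the upper bound via $|f(s+t)-f(s)|\le |Df|((s,s+t])$ followed by Fubini against $d|Df|$ and dominated convergence, are both correct; the observation that $|Df|((\sigma-t,\sigma+t))^{q-1}\to 0$ on the diffuse part is exactly where $q>1$ enters. This is more elementary than the mollification argument, at the price of being tailored to $W(a,b)=|a-b|^q$, whereas the paper's method covers arbitrary $W$.

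Two small points you glide over. First, the translation estimate and the slicing are written as if $u$ lived on all of $\R^N$; you should invoke the bounded Lipschitz boundary to extend $u$ to $\tilde u\in BV(\R^N)\cap L^\infty$ with $\|D\tilde u\|(\partial\Omega)=0$, and then note that the contribution from $\R^N\setminus(\Omega\cap(\Omega-\e z))$ is negligible by the same $L^\infty$/translation bound (the paper does this by first working on compact $K\subset\subset\Omega$ with $\|Du\|(\partial K)=0$ and then exhausting $\Omega$). Second, passing from the 1D lemma to the $N$-dimensional directional limit requires dominated convergence over the slicing variable $y\in\nu^\perp$ as well; the dominator $(2\|u\|_\infty)^{q-1}|Du_y|(\R)$, which integrates to at most $(2\|u\|_\infty)^{q-1}|Du|(\R^N)$, does the job.
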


In the present paper we generalize the formula
\er{hjhjjggjjjkhkhjjhjhhgghjhjh} in several aspects:
\begin{itemize}
\item We allow a general
mollifying kernel $\eta\in W^{1,1}(\R^N,\R)$ (not only the
Gaussian as before),
\item We allow a general domain $\Omega\subset\R^N$, of certain regularity, while previous results required  $\Omega=\R^N$,
\item We treat the $W^{1/q,q}(\Omega)$-norm for any $q>1$, while previous results were restricted to the case $q=2$.
\end{itemize}
Recall that the Gagliardo seminorm $\|u\|_{W^{1/q,q}(\Omega,\R^d)}$ is
given by
\begin{equation}\label{fgyufghfghjgghgjkhkkGHGHKKokuhhhugugzzkhhbvqkkklklojijghhfRRhh}
\|u\|_{W^{1/q,q}(\Omega,\R^d)}:=\Bigg(\int_{\Omega}\bigg(\int_{\Omega}\frac{\big|u(x)-u(y)\big|^q}{|x-y|^{N+1}}dy\bigg)dx\Bigg)^{\frac{1}{q}}.
\end{equation}
Our first main result is
\begin{theorem}\label{jbnvjnnjvnvnnbhh}
Let $\Omega\subset\R^N$ be an open set and let $u\in
BV(\R^N,\R^d)\cap L^\infty(\R^N,\R^d)$ be such that
$\|Du\|(\partial\Omega)=0$.
For $\eta\in W^{1,1}(\R^N,\R)$,
every $x\in\R^N$ and every $\e>0$ define
\begin{equation}\label{jmvnvnbccbvhjhjhhjjkhgjgGHKKzzbvqkhhihkinthh}
u_\e(x):=\frac{1}{\e^N}\int_{\R^N}\eta\Big(\frac{y-x}{\e}\Big)u(y)dy=(\eta_\e*u)(x).
\end{equation}
Then, for any $q>1$ we have
\begin{multline}\label{fgyufghfghjgghgjkhkkGHGHKKokuhhjujkjhjhhhhugugzzkhhbvqkkklkljjkjkjhjgklkljkll;k;k;ouuiojkkjhkhkljl;klk;k;jhkhkhjjkhkjljjk;kk;jkkhihkjljllmjhnjghghgjhjhjhhjhjRRhh}
\lim_{\e\to
0^+}\frac{1}{|\ln{\e}|}\|u_\e\|^q_{W^{1/q,q}(\Omega,\R^d)}=
\\
2\bigg|\int_{\R^N}\eta(z)dz\bigg|^q\Bigg(\int_{\R^{N-1}}\frac{dv}{\big(\sqrt{1+|v|^2}\big)^{N+1}}\Bigg)\int_{J_u\cap\Omega}
\Big|u^+(x)-u^-(x)\Big|^q d\mathcal{H}^{N-1}(x).
\end{multline}
\end{theorem}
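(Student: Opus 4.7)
The plan is to decompose the Gagliardo integral by scale $r=|x-y|$ and by position relative to the jump set $J_u$, isolating the logarithmic divergence, which arises from pairs on opposite sides of $J_u$ at distance $\gg\e$. Throughout set $c:=\int_{\R^N}\eta$ and $A_N:=\int_{\R^{N-1}}(1+|v|^2)^{-(N+1)/2}\,dv$.

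\textbf{Short-range cut.} Split $\|u_\e\|^q_{W^{1/q,q}(\Omega,\R^d)} = I^{\mathrm{short}}_\e + I^{\mathrm{long}}_\e$ at $|x-y|=\e$. Since $u_\e$ is smooth with $\|\nabla u_\e\|_{L^\infty}\le \|u\|_\infty\|\nabla\eta\|_{L^1}/\e$ and $\|\nabla u_\e\|_{L^1(\R^N)}=\|\eta_\e* Du\|_{L^1}\le\|\eta\|_{L^1}\|Du\|(\R^N)$, interpolation gives $\|\nabla u_\e\|^q_{L^q}\le \|\nabla u_\e\|^{q-1}_\infty\|\nabla u_\e\|_{L^1}=O(\e^{1-q})$. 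Combined with Jensen's inequality $|u_\e(x)-u_\e(y)|^q\le|y-x|^q\int_0^1|\nabla u_\e(x+t(y-x))|^q\,dt$ and the elementary bound $\int_{|h|\le\e}|h|^{q-N-1}\,dh=O(\e^{q-1})$, this yields $I^{\mathrm{short}}_\e=O(1)$, which is absorbed in the $o(|\ln\e|)$ remainder.

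\textbf{Local blow-up and leading term.} For $I^{\mathrm{long}}_\e$, cover $J_u\cap\Omega$, up to $\mathcal H^{N-1}$-null error (using the rectifiability of $J_u$ and the hypothesis $\|Du\|(\partial\Omega)=0$), by small patches $\mathcal A_k\subset B_{\rho_k}(x_k)$ in which, after rotating so that $\nu(x_k)=e_N$, $u$ is close in $L^1$ to the step function $u^-(x_k)+(u^+(x_k)-u^-(x_k))\chi_{\{z_N>0\}}$ and $J_u\cap B_{\rho_k}$ is close to $\{z_N=0\}$. Correspondingly $u_\e$ locally approximates the one-dimensional profile $cu^-(x_k)+(u^+(x_k)-u^-(x_k))\Phi(z_N/\e)$, where $\Phi(s):=\int_{-\infty}^s\tilde\eta(\tau)\,d\tau$ and $\tilde\eta(s):=\int_{\R^{N-1}}\eta(s,v)\,dv$, so that $\Phi(-\infty)=0$ and $\Phi(+\infty)=c$. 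For this profile the dominant contribution to $I^{\mathrm{long}}_\e$ comes from pairs $(x,y)$ on opposite sides of $\{z_N=0\}$ with $|x-y|>\e$. Integrating out the transverse coordinate $\xi=x'-y'\in\R^{N-1}$ via
\[
\int_{\R^{N-1}}\frac{d\xi}{(|\xi|^2+a^2)^{(N+1)/2}}=\frac{A_N}{a^2}\qquad(a:=|x_N-y_N|),
\]
and then rescaling $x_N=\e\a$, $y_N=\e\b$, the per-patch contribution reduces to
\[
A_N\,|u^+(x_k)-u^-(x_k)|^q\,\mathcal H^{N-1}(\mathcal A_k)\int_{|\a|,|\b|\le L/\e}\frac{|\Phi(\a)-\Phi(\b)|^q}{(\a-\b)^2}\,d\a\,d\b.
\]
A direct elementary estimate, using $\Phi(\a)\to c$ as $\a\to+\infty$ and $\Phi(\b)\to 0$ as $\b\to-\infty$ (together with the symmetric case, which accounts for the factor $2$), shows this last integral equals $2|c|^q|\ln\e|+O(1)$. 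Summing over patches yields the asserted right-hand side.

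\textbf{Main obstacle.} The reduction to the step profile on each patch, and the summation over the rectifiable cover, require showing that the following error terms all contribute $o(|\ln\e|)$: (a) pairs $(x,y)$ both on the same side of $J_u$; (b) pairs with at least one endpoint within $\e$ of $J_u$; (c) the deviation of $u$ from its flat step approximation within each patch. The main difficulty is (c): one needs \emph{quantitative} blow-up control of $u(x_k+\e z)$ toward $u^-(x_k)+(u^+(x_k)-u^-(x_k))\chi_{\{z_N>0\}}$ in $L^q_{\mathrm{loc}}$, uniform enough across the cover of $J_u$, in order to absorb the residual absolutely continuous and Cantor parts of $Du$ without producing a spurious subleading logarithm. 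The generality of $\eta\in W^{1,1}(\R^N)$ (rather than compactly supported) adds some technical complications, which can be handled by approximating $\eta$ in $W^{1,1}$ by compactly supported kernels.
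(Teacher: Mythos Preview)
Your outline is a plausible alternative to the paper's argument, and the pieces you do carry out --- the short-range bound $I_\e^{\mathrm{short}}=O(1)$, the one-dimensional model integral $\int\!\!\int|\Phi(\a)-\Phi(\b)|^q(\a-\b)^{-2}\,d\a\,d\b=2|c|^q|\ln\e|+O(1)$, and the reduction from $\eta\in W^{1,1}$ to $\eta\in C^\infty_c$ by density --- are correct (the last of these is exactly what the paper does, via its Lemma~2.1). The route, however, is genuinely different. You attack the problem by a blow-up/covering argument along $J_u$, whereas the paper differentiates in $\e$ and applies L'H\^opital's rule. Concretely, the paper computes
\[
\frac{d}{d\e}u_\e(x)=\frac{1}{\e^N}\int_{\R^N}\eta\Big(\frac{y-x}{\e}\Big)\,\frac{y-x}{\e}\cdot d[Du(y)],
\]
so that after L'H\^opital the whole expression becomes an integral against the \emph{measure} $Du$. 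Passing to the limit $\e\to0^+$ inside this integral by dominated convergence, using only the pointwise definition of approximate jump/continuity points, kills the diffuse part $D^au+D^cu$ automatically: at an approximate continuity point the integrand takes the form $\nabla F_q(0)=0$ (with $F_q(h)=|h|^q$, $q>1$). No covering of $J_u$, no flattening, and no uniform blow-up control are needed.

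This is precisely the machinery your approach is missing. You correctly flag~(c) --- uniform $L^q$ control of the blow-up error across the cover --- as the main obstacle, but you do not indicate how to close it, and it does not follow from what you have written. To make the covering argument rigorous one would need a \emph{localized} a~priori estimate of the type
\[
\frac{1}{|\ln\e|}\,\|v_\e\|^q_{W^{1/q,q}(B_\rho)}\;\le\;C\big(\|v\|_{L^\infty},\|\eta\|_{W^{1,1}}\big)\,\|Dv\|(B_{2\rho}),
\]
applied to the remainder $v:=u-u_{\mathrm{step}}$ in each patch; then $\|Dv\|(B_{2\rho})=o(\rho^{N-1})$ at $\mathcal H^{N-1}$-a.e.\ jump point by the very definition of approximate jump, and summing over a fine cover yields an error $o(1)\cdot|\ln\e|$ which vanishes after a final $\rho\to0$. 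The paper proves exactly such a bound globally (its Lemma~2.1, with explicit dependence on $\|Du\|(\R^N)$), though it uses it only for the kernel approximation step; your route would need a localized version of it as the backbone of the error control for~(a), (b) and~(c) alike. Without this, the proposal remains a heuristic; with it, your argument can be completed, but at considerably greater length than the paper's L'H\^opital computation.
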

Theorem \ref{jbnvjnnjvnvnnbhh} enables us to prove an upper bound, in
the limit $\e\to0^+$, for the following
singular perturbation functionals with differential constraints:
\begin{itemize}
\item[({\bf i})]
\begin{equation}\label{fgyufghfghjgghgjkhkkGHGHKKokuhhjujkjhjhhhhugugzzkhhbvqkkklkljjkjkjhjgklkljkll;k;k;ouuiojkkjhkhkljl;klk;k;jhkhkhjjkhkjljjk;kk;jkkhihkjljllmjhnjghghgjhjhjhhjhjRRRRkkjjlkljjljljj}
E^{(1)}_\e(v):=\begin{cases}\frac{1}{|\ln{\e}|}\|v\|^q_{W^{1/q,q}(\Omega,\R^d)}+\frac{1}{\e}\int_{\Omega}W\big(v,x\big)dx\quad\quad\text{if}\quad A\cdot\nabla v=0\\
+\infty\quad\quad\text{otherwise},
\end{cases}
\end{equation}
for $v:\Omega\to\R^d$;
\item[({\bf ii})]
\begin{equation}\label{fgyufghfghjgghgjkhkkGHGHKKokuhhjujkjhjhhhhugugzzkhhbvqkkklkljjkjkjhjgklkljkll;k;k;ouuiojkkjhkhkljl;klk;k;jhkhkhjjkhkjljjk;kk;jkkhihkjljllmjhnjghghgjhjhjhhjhjRRRRkkjjlkljjljljjhjjggj}
E^{(2)}_\e(v):=\begin{cases}\frac{1}{|\ln{\e}|}\left(\|v\|^q_{W^{1/q,q}(\R^N,\R^d)}-\|v\|^q_{W^{1/q,q}(\R^N\setminus\ov \Omega,\R^d)}\right)+\frac{1}{\e}\int_{\Omega}W\big(v,x\big)dx\quad\text{if}\quad A\cdot\nabla v=0\\
+\infty\quad\quad\text{otherwise},
\end{cases}
\end{equation}
for $v:\R^N\to\R^d$.
\end{itemize}
In both cases $A:\R^{d\times N}\to\R^l$ is a linear operator
(possibly trivial). The most important particular cases are the
following:
\begin{itemize}
\item[{(\bf a)}] $A\equiv 0$ (i.e., without any prescribed differential
constraint),
\item[{(\bf b)}] $d=N$, $l=N^2$ and $A\cdot\nabla v\equiv \text{curl}\, v:=\big\{\partial_kv_j-\partial_jv_k\big\}_{1\leq k,j\leq N}$,
\item[{(\bf c)}] $l=d$ and $A\cdot\nabla v\equiv \Div v$.
\end{itemize}
The $\Gamma$-limit of the functional
\er{fgyufghfghjgghgjkhkkGHGHKKokuhhjujkjhjhhhhugugzzkhhbvqkkklkljjkjkjhjgklkljkll;k;k;ouuiojkkjhkhkljl;klk;k;jhkhkhjjkhkjljjk;kk;jkkhihkjljllmjhnjghghgjhjhjhhjhjRRRRkkjjlkljjljljj}
in the $L^p$-topology when  $A\equiv 0$, $q=2$, $N=1$  and $W$ is a
double-well potential was found by Alberti, Bouchitt\'e and Seppecher~\cite{GaGbPs}. The result was
generalized to any dimension $N\ge1$, for
 the functional
\er{fgyufghfghjgghgjkhkkGHGHKKokuhhjujkjhjhhhhugugzzkhhbvqkkklkljjkjkjhjgklkljkll;k;k;ouuiojkkjhkhkljl;klk;k;jhkhkhjjkhkjljjk;kk;jkkhihkjljllmjhnjghghgjhjhjhhjhjRRRRkkjjlkljjljljjhjjggj},
by Savin and Valdinoci~\cite{OSEV}.
\par  Note that the
functional
\er{fgyufghfghjgghgjkhkkGHGHKKokuhhjujkjhjhhhhugugzzkhhbvqkkklkljjkjkjhjgklkljkll;k;k;ouuiojkkjhkhkljl;klk;k;jhkhkhjjkhkjljjk;kk;jkkhihkjljllmjhnjghghgjhjhjhhjhjRRRRkkjjlkljjljljj}
resembles the energy functional in the following singular
perturbation problem:
\begin{equation}\label{fgyufghfghjgghgjkhkkGHGHKKokuhhjujkjhjhhhhugugzzkhhbvqkkklkljjkjkjhjgklkljkll;k;k;ouuiojkkjhkhkljl;klk;k;jhkhkhjjkhkjljjk;kk;jkkhihkjljllmjhnjghghgjhjhjhhjhjRRRRkkjjlkljjljljjjhhhj}
\hat E_\e(v):=\begin{cases}\e^{q-1}\|v\|^q_{W^{1,q}(\Omega,\R^d)}+\frac{1}{\e}\int_{\Omega}W\big(v,x\big)dx\quad\quad\text{if}\quad A\cdot\nabla v=0\\
+\infty\quad\quad\text{otherwise},
\end{cases}
\end{equation}
that attracted  a lot of attention by many authors, starting from Modica and
Mortola \cite{mm1}, Modica \cite{modica}, Sternberg
\cite{sternberg} and others, who studied the basic special case of
\er{fgyufghfghjgghgjkhkkGHGHKKokuhhjujkjhjhhhhugugzzkhhbvqkkklkljjkjkjhjgklkljkll;k;k;ouuiojkkjhkhkljl;klk;k;jhkhkhjjkhkjljjk;kk;jkkhihkjljllmjhnjghghgjhjhjhhjhjRRRRkkjjlkljjljljjjhhhj}
with $A\equiv 0$, $q=2$ and  $W$ being a double-well potential.
The $\Gamma$ limit of
\er{fgyufghfghjgghgjkhkkGHGHKKokuhhjujkjhjhhhhugugzzkhhbvqkkklkljjkjkjhjgklkljkll;k;k;ouuiojkkjhkhkljl;klk;k;jhkhkhjjkhkjljjk;kk;jkkhihkjljllmjhnjghghgjhjhjhhjhjRRRRkkjjlkljjljljjjhhhj}
with $A\equiv 0$, $q=2$ and a general $W\in C^0$ that does not depend on $x$, was found by Ambrosio in \cite{ambrosio}.
As an example with a nontrivial differential constraint we mention the Aviles-Giga
functional, that  appear in various applications. It is defined for scalar functions $\psi$ by
\begin{equation}
\label{b5..jkhh} \tilde
E_\varepsilon(\psi):=\int_\Omega\left\{\varepsilon|\nabla^2
\psi|^2+\frac{1}{\varepsilon}\Big(1-|\nabla
\psi|^2\Big)^2\right\}dx\quad
\quad\text{(see \cite{adm,ag1,ag2})},
\end{equation}
and the objective is to study the
$\Gamma$-limit, as $\varepsilon\to 0^+$. This can be seen as
 a special  case of
\er{fgyufghfghjgghgjkhkkGHGHKKokuhhjujkjhjhhhhugugzzkhhbvqkkklkljjkjkjhjgklkljkll;k;k;ouuiojkkjhkhkljl;klk;k;jhkhkhjjkhkjljjk;kk;jkkhihkjljllmjhnjghghgjhjhjhhjhjRRRRkkjjlkljjljljjjhhhj}
if we set $v:=\nabla\psi$ and let  $A\cdot \nabla v\equiv \text{curl}\, v$, $q=2$ and
$W(v,x)=(1-|v|^2)^2$.

Our second result provides an upper bound for the energies
\er{fgyufghfghjgghgjkhkkGHGHKKokuhhjujkjhjhhhhugugzzkhhbvqkkklkljjkjkjhjgklkljkll;k;k;ouuiojkkjhkhkljl;klk;k;jhkhkhjjkhkjljjk;kk;jkkhihkjljllmjhnjghghgjhjhjhhjhjRRRRkkjjlkljjljljj}-\er{fgyufghfghjgghgjkhkkGHGHKKokuhhjujkjhjhhhhugugzzkhhbvqkkklkljjkjkjhjgklkljkll;k;k;ouuiojkkjhkhkljl;klk;k;jhkhkhjjkhkjljjk;kk;jkkhihkjljllmjhnjghghgjhjhjhhjhjRRRRkkjjlkljjljljjhjjggj}:
\begin{theorem}\label{jbnvjnnjvnvnnbRRkkjj}
Let  $\Omega\subset\R^N$ be an open set and let
$W:\R^d\times\R^N\to\R$ be a Borel measurable nonnegative
function, continuous and continuously differentiable w.r.t.~the first argument, such that $W(0,\cdot)\in L^1(\Omega,\R)$. Assume further  that for every
$D>0$ there exists $C:=C_D>0$ such that
\begin{equation}\label{jmvnvnbccbvhjhjhhjjkhgjgGHKKzzbvqkhhihkRRruuyrfhjkkhhhhhjkk}
\big|\nabla_b W(b,x)\big|\leq C_D\quad\quad\forall
x\in\R^N,\;\forall\, b\in B_D(0).
\end{equation}
Let $u\in BV(\R^N,\R^d)\cap L^\infty(\R^N,\R^d)$ be
such that  $W\big(u(x),x\big)=0$ a.e. in $\Omega$,
$\|Du\|(\partial\Omega)=0$, and $A\cdot Du=0$ in $\R^N$, where
$A:\R^{d\times N}\to\R^l$ is a prescribed linear operator (possibly
trivial).
Then, for any $q>1$ there exists a sequence of functions
$\big\{\psi_\e\big\}_{\e>0}\subset C^\infty(\R^N,\R^d)\cap
W^{1,1}(\R^N,\R^d)\cap W^{1,\infty}(\R^N,\R^d)$
such that $A\cdot D\psi_\e=0$ in $\R^N$, $\psi_\e(x)\to u(x)$
strongly in $L^p(\R^N,\R^d)$ for every $p\geq 1$, and
\begin{multline}\label{fgyufghfghjgghgjkhkkGHGHKKokuhhjujkjhjhhhhugugzzkhhbvqkkklkljjkjkjhjgklkljkll;k;k;ouuiojkkjhkhkljl;klk;k;jhkhkhjjkhkjljjk;kk;jkkhihkjljllmjhnjghghgjhjhjhhjhjRRRRkk}
\limsup_{\e\to
0^+}\bigg(\frac{1}{|\ln{\e}|}\Big(\|\psi_\e\|^q_{W^{1/q,q}(\R^N,\R^d)}-\|\psi_\e\|^q_{W^{1/q,q}(\R^N\setminus
\ov\Omega,\R^d)}\Big)+\frac{1}{\e}\int_{\Omega}W\Big(\psi_\e(x),x\Big)dx
\bigg)=\\
\limsup_{\e\to
0^+}\bigg(\frac{1}{|\ln{\e}|}\|\psi_\e\|^q_{W^{1/q,q}(\Omega,\R^d)}+\frac{1}{\e}\int_{\Omega}W\Big(\psi_\e(x),x\Big)dx
\bigg)=\\
\Bigg(\int_{\R^{N-1}}\frac{2}{\big(\sqrt{1+|v|^2}\big)^{N+1}}dv\Bigg)\int_{J_u\cap\Omega}
\big|u^+(y)-u^-(y)\big|^q d\mathcal{H}^{N-1}(y).
\end{multline}
Moreover, in the case $A\equiv 0$ we can   choose $\psi_\e$ to
satisfy also
\begin{equation}\label{jmvnvnbccbvhjhjhhjjkhgjgGHKKzzbvqkhhihkRRkkhihhjhjhjhkk}
\int_\Omega \psi_\e(x)dx=\int_\Omega u(x)dx\quad\quad\forall\e>0.
\end{equation}
\end{theorem}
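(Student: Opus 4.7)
I take as ansatz the mollification $\psi_\e:=\eta_{\delta(\e)}*u$ with fixed $\eta\in C^\infty_c(\R^N,\R)$, $\int_{\R^N}\eta=1$, and scale
\[
\delta(\e):=\e/|\ln\e|^{\b}\qquad(\b>0\text{ arbitrary}),
\]
chosen so that $\delta(\e)\ll\e$ while $|\ln\delta(\e)|/|\ln\e|\to 1$. Then $\psi_\e$ is smooth, lies in $W^{1,1}(\R^N,\R^d)\cap W^{1,\infty}(\R^N,\R^d)$ with $\|\psi_\e\|_{L^\infty}\leq\|u\|_{L^\infty}\|\eta\|_{L^1}$; the differential constraint is automatic, since $A\cdot D\psi_\e=\eta_{\delta(\e)}*(A\cdot Du)=0$; and the standard $BV$-mollification bound
\[
\|\psi_\e-u\|_{L^1(\R^N)}\leq\delta(\e)\,\|Du\|(\R^N)\int_{\R^N}|z|\,|\eta(z)|\,dz,
\]
combined with the uniform $L^\infty$ bound, yields $\psi_\e\to u$ strongly in $L^p(\R^N,\R^d)$ for every $p\in[1,\infty)$.

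For the fractional-norm terms I invoke Theorem~\ref{jbnvjnnjvnvnnbhh} at scale $\delta(\e)$: directly with $\O$ for the functional $E^{(1)}_\e$, and separately with the open sets $\R^N$ and $\R^N\setminus\ov\O$ for $E^{(2)}_\e$ (in both applications the hypothesis $\|Du\|(\partial\O)=0$ transfers to the required boundary condition, and the difference of the two resulting norms is precisely the $E^{(2)}$-expression). The normalization factor $|\ln\delta(\e)|/|\ln\e|\to 1$ together with $|\int\eta|^q=1$ produces, in the $E^{(1)}$ case, exactly the right-hand side of \er{fgyufghfghjgghgjkhkkGHGHKKokuhhjujkjhjhhhhugugzzkhhbvqkkklkljjkjkjhjgklkljkll;k;k;ouuiojkkjhkhkljl;klk;k;jhkhkhjjkhkjljjk;kk;jkkhihkjljllmjhnjghghgjhjhjhhjhjRRRRkk}; in the $E^{(2)}$ case, the limit is $\int_{J_u}-\int_{J_u\setminus\ov\O}|u^+-u^-|^q d\mathcal{H}^{N-1}=\int_{J_u\cap\ov\O}|u^+-u^-|^q d\mathcal{H}^{N-1}$, which coincides with $\int_{J_u\cap\O}|u^+-u^-|^q d\mathcal{H}^{N-1}$ because $\|Du\|(\partial\O)=0$ forces $u^+=u^-$ $\mathcal{H}^{N-1}$-a.e.\ on $J_u\cap\partial\O$.

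The main obstacle is controlling the potential term, and this is exactly what dictates $\delta(\e)\ll\e$. Using $W(u(\cdot),\cdot)=0$ a.e.\ in $\O$ together with the Lipschitz bound \er{jmvnvnbccbvhjhjhhjjkhgjgGHKKzzbvqkhhihkRRruuyrfhjkkhhhhhjkk} on the ball $B_D(0)$ with $D:=2\|u\|_{L^\infty}(1+\|\eta\|_{L^1})$,
\[
W(\psi_\e(x),x)=W(\psi_\e(x),x)-W(u(x),x)\leq C_D\,|\psi_\e(x)-u(x)|\quad\text{a.e.\ }x\in\O,
\]
so integrating and using the $L^1$-mollification bound gives
\[
\frac{1}{\e}\int_\O W(\psi_\e,x)\,dx\leq\frac{C_D\,\|Du\|(\R^N)\left(\int|z||\eta(z)|dz\right)\delta(\e)}{\e}=\frac{\mathrm{const}}{|\ln\e|^\b}\to 0,
\]
while the purely logarithmic gap between $\delta(\e)$ and $\e$ preserves the $W^{1/q,q}$-normalization; any power-law choice $\delta(\e)=\e^\alpha$ with $\alpha>1$ would instead inflate the $W^{1/q,q}$-limit by the factor $\alpha$ and fail.

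Finally, for the mean-preservation clause \er{jmvnvnbccbvhjhjhhjjkhgjgGHKKzzbvqkhhihkRRkkhihhjhjhjhkk} in the case $A\equiv 0$, I correct $\psi_\e$ by a small smooth bump: fix $\chi\in C^\infty_c(\O,\R)$ with $\int\chi=1$, set $c_\e:=\int_\O(u-\psi_\e)\,dx\in\R^d$ (of size $|c_\e|=O(\delta(\e))$ by the $L^1$ estimate), and redefine $\psi_\e:=\psi_\e+c_\e\chi$. The Lipschitz bound makes the extra potential contribution $O(|c_\e|)=O(\delta(\e))$, negligible after division by $\e$; and, using $|A^q-B^q|\leq q\max(A,B)^{q-1}|A-B|$ with $A,B$ the respective $W^{1/q,q}$-seminorms and $|A-B|=O(\delta(\e))$, $\max(A,B)=O(|\ln\e|^{1/q})$, the extra fractional-norm contribution is $O(|\ln\e|^{(q-1)/q}\delta(\e))=o(|\ln\e|)$, negligible after division by $|\ln\e|$; $L^p$-convergence and smoothness are obviously preserved.
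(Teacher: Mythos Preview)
Your argument is correct and follows the same underlying idea as the paper's proof: mollify $u$ at a scale that is $o(\e)$ (so the potential term vanishes after dividing by $\e$) yet $\e^{1+o(1)}$ (so the $|\ln\cdot|$ normalization is unaffected). The difference is purely in the packaging. The paper works with a two-parameter family $u_{\rho,\e}:=\eta_{\e\rho}*u$, first proves (Corollary~\ref{jbnvjnnjvnvnnbRR}) that
\[
\lim_{\rho\to0^+}\limsup_{\e\to0^+}\Big(\tfrac{1}{|\ln\e|}\|u_{\rho,\e}\|^q_{W^{1/q,q}}+\tfrac{1}{\e}\!\int_\O W(u_{\rho,\e},x)\,dx\Big)=\text{RHS},
\]
and then invokes a ``standard diagonal argument'' to extract a one-parameter family $\psi_\e$. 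You instead make the diagonal choice explicit from the outset via $\delta(\e)=\e/|\ln\e|^\beta$, which is cleaner and avoids the two-step limit. Your observation that any power choice $\delta(\e)=\e^\alpha$, $\alpha>1$, would inflate the limit by $\alpha$ is exactly why the paper needs the double limit (or, equivalently, why you need a sub-polynomial gap).

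For the mean-preserving correction the paper does something slightly sharper: it writes $W(\tilde u_{\rho,\e},x)-W(u_{\rho,\e},x)$ via the mean-value formula and exploits that $\nabla_bW(u(x),x)=0$ a.e.\ (a consequence of $W\ge0$, $W(u,\cdot)=0$, and differentiability) to show the correction term has $\limsup_{\e\to0^+}=0$ \emph{for each fixed} $\rho$. Your cruder Lipschitz bound gives only $O(\delta(\e)/\e)$, but with your explicit $\delta(\e)$ this already tends to $0$, so the refinement is unnecessary. In this sense your argument is marginally more robust: it uses only the Lipschitz consequence of \er{jmvnvnbccbvhjhjhhjjkhgjgGHKKzzbvqkhhihkRRruuyrfhjkkhhhhhjkk}, not the full $C^1$ structure.
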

Unfortunately, the upper bound found in Theorem
\ref{jbnvjnnjvnvnnbRRkkjj} is not sharp in the most general case
with a nontrivial prescribed differential constraint. For example, in
the particular case of
\er{fgyufghfghjgghgjkhkkGHGHKKokuhhjujkjhjhhhhugugzzkhhbvqkkklkljjkjkjhjgklkljkll;k;k;ouuiojkkjhkhkljl;klk;k;jhkhkhjjkhkjljjk;kk;jkkhihkjljllmjhnjghghgjhjhjhhjhjRRRRkkjjlkljjljljj}
with $N=2$, $A\cdot \nabla v\equiv \text{curl}\, v$, $q>3$ and
$W(v,x)=(1-|v|^2)^2$, the functional on the R.H.S.~of \er{fgyufghfghjgghgjkhkkGHGHKKokuhhjujkjhjhhhhugugzzkhhbvqkkklkljjkjkjhjgklkljkll;k;k;ouuiojkkjhkhkljl;klk;k;jhkhkhjjkhkjljjk;kk;jkkhihkjljllmjhnjghghgjhjhjhhjhjRRRRkk}
 is not lower semicontinuous, hence  cannot be the
 $\Gamma$-limit (see \cite{adm}).
 However, we
still hope that the result of the above theorem could provide the
sharp upper bound in some cases with $A=0$. Indeed,  the
$\Gamma$-limit, computed in \cite{GaGbPs} for  the special case of
\er{fgyufghfghjgghgjkhkkGHGHKKokuhhjujkjhjhhhhugugzzkhhbvqkkklkljjkjkjhjgklkljkll;k;k;ouuiojkkjhkhkljl;klk;k;jhkhkhjjkhkjljjk;kk;jkkhihkjljllmjhnjghghgjhjhjhhjhjRRRRkkjjlkljjljljj}
 with  $A\equiv 0$, $q=2$, $N=1$ and $W$
being a double well potential,
coincides
 with the upper bound found in Theorem
\ref{jbnvjnnjvnvnnbRRkkjj}. Moreover, since the functional in
\er{fgyufghfghjgghgjkhkkGHGHKKokuhhjujkjhjhhhhugugzzkhhbvqkkklkljjkjkjhjgklkljkll;k;k;ouuiojkkjhkhkljl;klk;k;jhkhkhjjkhkjljjk;kk;jkkhihkjljllmjhnjghghgjhjhjhhjhjRRRRkkjjlkljjljljjhjjggj}
is superior to the functional in
\er{fgyufghfghjgghgjkhkkGHGHKKokuhhjujkjhjhhhhugugzzkhhbvqkkklkljjkjkjhjgklkljkll;k;k;ouuiojkkjhkhkljl;klk;k;jhkhkhjjkhkjljjk;kk;jkkhihkjljllmjhnjghghgjhjhjhhjhjRRRRkkjjlkljjljljj},
the $\Gamma$-limit, found in \cite{OSEV} (see also \cite{GPOSEV})
for the energy
\er{fgyufghfghjgghgjkhkkGHGHKKokuhhjujkjhjhhhhugugzzkhhbvqkkklkljjkjkjhjgklkljkll;k;k;ouuiojkkjhkhkljl;klk;k;jhkhkhjjkhkjljjk;kk;jkkhihkjljllmjhnjghghgjhjhjhhjhjRRRRkkjjlkljjljljjhjjggj}
in any dimension $N\geq 1$ with $A\equiv 0$, $q=2$ and $W$ being a
double well potential, coincides again with our upper bound.

The paper is organized as follows. In section \ref{hjggjggfgfh} we
prove  our two main results. For the convenience of the reader, in
the Appendix
we recall some known results on $BV$ functions, needed for the
proofs.

\section{Proof of the main results}\label{hjggjggfgfh}
\begin{proposition}\label{jbnvjnnjvnvnnb}
Let $q>1$, $\Omega\subset\R^N$ be an open set and $u\in
BV(\R^N,\R^d)\cap L^\infty(\R^N,\R^d)$ be such that
$\|Du\|(\partial\Omega)=0$.
Let $\eta\in C^\infty_c(\R^N,\R)$ and
for every $x\in\R^N$ and every $\e>0$ define
\begin{equation}\label{jmvnvnbccbvhjhjhhjjkhgjgGHKKzzbvqkhhihkint}
u_\e(x):=\frac{1}{\e^N}\int_{\R^N}\eta\Big(\frac{y-x}{\e}\Big)u(y)dy=(\eta_\e*u)(x).
\end{equation}
Then,
\begin{multline}\label{fgyufghfghjgghgjkhkkGHGHKKokuhhjujkjhjhhhhugugzzkhhbvqkkklkljjkjkjhjgklkljkll;k;k;ouuiojkkjhkhkljl;klk;k;jhkhkhjjkhkjljjk;kk;jkkhihkjljllmjhnjghghgjhjhjhhjhjRR}
\lim_{\e\to
0^+}\frac{1}{|\ln{\e}|}\|u_\e\|^q_{W^{1/q,q}(\Omega,\R^d)}
=
\\
2\bigg|\int_{\R^N}\eta(z)dz\bigg|^q\Bigg(\int_{\R^{N-1}}\frac{1}{\big(\sqrt{1+|v|^2}\big)^{N+1}}dv\Bigg)\int_{J_u\cap\Omega}
\Big|u^+(x)-u^-(x)\Big|^q d\mathcal{H}^{N-1}(x).
\end{multline}
\end{proposition}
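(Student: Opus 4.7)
The plan is to compute the limit explicitly when $u$ is a single two-valued step function across a hyperplane, and to reduce the general case to this via a blow-up/covering argument on $J_u$, while showing that the absolutely continuous and Cantor parts of $Du$ produce only $o(|\ln\e|)$ contributions. The hypothesis $\|Du\|(\partial\Omega)=0$ guarantees that no jump mass is concentrated on $\partial\Omega$, so it can be excluded from the covering without loss.

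\textbf{Step 1: the model step-function computation.} Let $V(x) = a\chi_{\{x_1>0\}} + b\chi_{\{x_1<0\}}$ on $\R^N$. A direct computation gives $V_\e(x) = \Psi(x_1/\e)$ with
\[
\Psi(s) := a\int_{\{z_1<s\}}\eta(z)\,dz + b\int_{\{z_1>s\}}\eta(z)\,dz,
\]
a smooth function satisfying $\Psi(-\infty) = bM$, $\Psi(+\infty) = aM$, where $M := \int_{\R^N}\eta$. On a cube $Q_R := (-R,R)^N$, split coordinates $x = (x_1, x')$, $y = (y_1, y')$, $w = x' - y'$, and use the identity
\[
\int_{\R^{N-1}} \frac{dw}{(|w|^2+(x_1-y_1)^2)^{(N+1)/2}} = \frac{K}{|x_1-y_1|^2},
\qquad
K := \int_{\R^{N-1}} \frac{dv}{(1+|v|^2)^{(N+1)/2}}.
\]
After the rescaling $x_1 = \e s$, $y_1 = \e t$, the double integral $\int_{Q_R}\int_{Q_R}|V_\e(x)-V_\e(y)|^q|x-y|^{-(N+1)}dy\,dx$ reduces to
\[
(2R)^{N-1}\, K \int_{-R/\e}^{R/\e}\!\!\int_{-R/\e}^{R/\e} \frac{|\Psi(s) - \Psi(t)|^q}{(s-t)^2}\,ds\,dt + O(1).
\]
Partitioning the $(s,t)$-domain according to whether $s$ and $t$ have opposite signs and large magnitudes (where $|\Psi(s)-\Psi(t)|\to M|a-b|$) or not (where the Lipschitz bound on $\Psi$ ensures integrability near the diagonal), this inner double integral grows like $2 M^q |a-b|^q \ln(R/\e) + O(1)$. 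Dividing by $|\ln\e|$ yields precisely the constant $2\,|\int\eta|^q K\,|a-b|^q$ per unit jump area $(2R)^{N-1}$.

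\textbf{Step 2: patching and error control.} For $u \in BV(\R^N,\R^d)\cap L^\infty$, at $\mathcal{H}^{N-1}$-a.e.~$x_0 \in J_u$ one has a blow-up of $u$ to the step function $U_{x_0}(x) := u^+(x_0)\chi_{\{(x-x_0)\cdot\nu>0\}} + u^-(x_0)\chi_{\{(x-x_0)\cdot\nu<0\}}$ with $\nu = \nu_u(x_0)$. Combined with the rectifiability of $J_u$, a Vitali/Besicovitch covering covers $\mathcal{H}^{N-1}$-almost all of $J_u\cap\Omega$ by disjoint flat cylinders $C_i$ normal to $\nu_i = \nu_u(x_i)$ inside which $u$ is $L^1$-close to the local step function $U_i$. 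Summing the Step~1 contributions over the cylinders gives the right-hand side of the claimed formula, up to three error terms: (i)~cross-integrals between distinct cylinders; (ii)~contributions involving points outside the covering; (iii)~the perturbation from replacing $u$ by $U_i$ within each $C_i$. Each of (i)--(iii) should be $O(1)$ (hence $o(|\ln\e|)$), which is verified by splitting the Slobodeckij integral into the near-diagonal regime $|x-y|\le\e$ (bounded using smoothness of $\eta_\e$ at scale $\e$ and the $L^\infty$-bound on $u$) and the far-diagonal regime (bounded via $L^1$-closeness after mollification and the integrability of $|x-y|^{-(N+1)}$ at large distances).

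\textbf{Main obstacle.} The chief technical difficulty lies in item~(iii): showing that within each cylinder the difference between $u_\e$ and the mollification of $U_i$ contributes only $O(1)$ to the Slobodeckij double integral, despite the possible presence of absolutely continuous and Cantor parts of $Du$ in $C_i$. The key is to combine the blow-up $L^1$-smallness of $u - U_i$ on $C_i$ with a quantitative bound on the scale-$\e$ differences of $u$; here Theorem~\ref{ghgghgghjjkjkzzbvqredint} applied to $u$ is well suited, as it controls the scale-$\e$ differences in terms of the jump integral plus an error vanishing with the cylinder size, thereby confirming the heuristic that only the jump part generates the logarithmic blow-up.
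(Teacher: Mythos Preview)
Your strategy is a natural $\Gamma$-convergence–style localization, but it is genuinely different from the paper's proof and, as written, leaves the decisive error estimate unjustified.

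The paper does \emph{not} cover $J_u$ and reduce to a model profile. Instead it applies L'H\^opital's rule to the quotient $\|u_\e\|^q_{W^{1/q,q}}/(-\ln\e)$, computing $\frac{d}{d\e}u_\e(x)$ explicitly as an integral against the measure $Du$ (via the identity $\frac{d}{d\e}\eta_\e*u=\e^{-1}\eta_\e*\big((y-x)\cdot Du\big)$). After changes of variables and a passage to the limit using the approximate jump structure of $u$ (which is where $\|Du\|(\partial\Omega)=0$ enters), the integral against $Du$ localizes to $J_u\cap\Omega$ automatically---the diffuse part of $Du$ drops out because at non-jump points $u(y+\e\xi)\to\tilde u(y)$ and the difference of shifted mollifier averages vanishes. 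A further reduction to a one-dimensional profile and a second L'H\^opital's rule (in an auxiliary parameter $M$) produce the constant $2|\int\eta|^q D_N$. There is no covering, no blow-up, and the Cantor/absolutely continuous parts never need separate treatment.

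In your approach, the weak point is item (iii): you need to show that replacing $u$ by the local step $U_i$ inside each cylinder perturbs the Gagliardo energy by something negligible after dividing by $|\ln\e|$. Theorem~\ref{ghgghgghjjkjkzzbvqredint} does not do this: it controls $\int_\Omega\int_{B_\e(x)}\e^{-N}|u(y)-u(x)|^q/|y-x|\,dy\,dx$, which is a different functional from $\|u_\e\|^q_{W^{1/q,q}}$ (no mollification, different kernel). What you actually need is the a~priori bound of Lemma~\ref{hjgjhfhfhfh}, which gives $\limsup_{\e\to0}\tfrac{1}{|\ln\e|}\|w_\e\|^q_{W^{1/q,q}}\le C\|Dw\|(\R^N)$ for any $w\in BV\cap L^\infty$; applied to $w=u-U_i$ this yields, via the triangle inequality for the $W^{1/q,q}$-seminorm, a perturbation of order $\|D(u-U_i)\|^{1/q}$ after renormalization. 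Even with this in hand, you must sum such errors over the covering and send the mesh to zero---a standard but nontrivial two-scale argument that your proposal omits. The paper's L'H\^opital route sidesteps all of this.
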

\begin{proof}
We start with some notations. For every $\vec\nu\in S^{N-1}$ and
$x\in\R^N$
set
\begin{align}
H_+(x,\vec\nu)&=\{\xi\in\R^N\,:\,(\xi-x)\cdot\vec\nu>0\}\,,\label{HN+}\\
H_-(x,\vec\nu)&=\{\xi\in\R^N\,:\,(\xi-x)\cdot\vec\nu<0\}\label{HN-}\\
\intertext{and} H_0(\vec \nu)&=\{ \xi\in\R^N\,:\, \xi\cdot\vec
\nu=0\}\label{HN}\,.
\end{align}
%
%
%
%
Let $R>0$ be such that
$\supp\eta\subset B_R(0)$. For every $x\in\R^N$ and every
$\e>0$ we rewrite \er{jmvnvnbccbvhjhjhhjjkhgjgGHKKzzbvqkhhihkint}
as:
\begin{equation}\label{jmvnvnbccbvhjhjhhjjkhgjgGHKKzzbvqkhhihk}
u_\e(x):=\frac{1}{\e^N}\int_{\R^N}\eta\Big(\frac{y-x}{\e}\Big)u(y)dy=\int_{\R^N}\eta(z)u(x+\e
z)dz=\int_{B_R(0)}\eta(z)u(x+\e z)dz.
\end{equation}
By \er{jmvnvnbccbvhjhjhhjjkhgjgGHKKzzbvqkhhihk}
we have
\begin{multline}\label{jmvnvnbccbvhjhjhhjjkhgjgGHKKzzbvqkhhihkhkhhj}
\frac{d}{d\e}u_\e(x):=-\frac{N}{\e^{N+1}}\int_{\R^N}\eta\Big(\frac{y-x}{\e}\Big)u(y)dy
-\frac{1}{\e^{N}}\int_{\R^N}\frac{y-x}{\e^2}\cdot\nabla\eta\Big(\frac{y-x}{\e}\Big)u(y)dy
=\\-\frac{1}{\e^{N}}\int_{\R^N}\Div_{y}\bigg\{\eta\Big(\frac{y-x}{\e}\Big)\frac{y-x}{\e}\bigg\}u(y)dy=
\frac{1}{\e^{N}}\int_{\R^N}\eta\Big(\frac{y-x}{\e}\Big)\frac{y-x}{\e}\cdot
d\big[Du(y)\big].
\end{multline}
Moreover, by \eqref{fgyufghfghjgghgjkhkkGHGHKKokuhhhugugzzkhhbvqkkklklojijghhfRRhh}
 we have
\begin{multline}\label{fgyufghfghjgghgjkhkkGHGHKKokuhhhugugzzkhhbvqkkklklojijghhf}
\|u_\e\|^q_{W^{1/q,q}}=\|u_\e\|^q_{W^{1/q,q}(\Omega,\R^d)}=\int_{\R^N}\bigg(\int_{\R^N}\frac{\big|u_\e(x)-u_\e(y)\big|^q}{|x-y|^{N+1}}\chi_\Omega(y)dy\bigg)\chi_\Omega(x)dx
\\=\int_{\R^N}\bigg(\int_{\R^N}\frac{\big|u_\e(x+z)-u_\e(x)\big|^q}{|z|^{N+1}}\chi_\Omega(x+z)\chi_\Omega(x)dz\bigg)dx,
\end{multline}
where
\begin{equation}\label{fhjgjgfgjjk}
\chi_\Omega(x):=\begin{cases} 1\quad\forall x\in\Omega
\\
0\quad\forall x\in\R^N\setminus\Omega
\end{cases}.
\end{equation}
Thus,
\begin{equation}\label{fgyufghfghjgghgjkhkkGHGHKKokuhhhugugzzkhhbvqkkklkl}
\frac{1}{-\ln{\e}}\|u_\e\|^q_{W^{1/q,q}}=-\frac{1}{\ln{\e}}\int_{\R^N}\bigg(\int_{\R^N}\frac{\big|u_\e(x+z)-u_\e(x)\big|^q}{|z|^{N+1}}
\chi_\Omega(x+z)\chi_\Omega(x)dz\bigg)dx.
\end{equation}
Since $-\ln{\e}\to+\infty$ as $\e\to 0^+$, applying L'H\^{o}pital's
rule to the expression in
\er{fgyufghfghjgghgjkhkkGHGHKKokuhhhugugzzkhhbvqkkklkl} yields
\begin{multline}\label{fgyufghfghjgghgjkhkkGHGHKKokuhhhugugzzkhhbvqkkklkljjkjk}
\lim_{\e\to
0^+}\frac{1}{-\ln{\e}}\|u_\e\|^q_{W^{1/q,q}}=\\-\lim\limits_{\e\to
0^+}\int\limits_{\R^N}\Bigg(\int\limits_{\R^N}\frac{\e}{|z|^{N+1}}\bigg(\frac{d}{d\e}\Big(u_\e(x+z)-u_\e(x)\Big)\bigg)\cdot\nabla
F_q\big(u_\e(x+z)-u_\e(x)\big)\chi_\Omega(x+z)\chi_\Omega(x)dz\Bigg)dx,
\end{multline}
where $F_q\in C^1(\R^d,\R)$ is defined by
\begin{equation}\label{fhjgjgfgjjkjhhjhj}
F_q(h):=|h|^q \quad\quad\forall h\in\R^d.
\end{equation}
Thus, by
\er{fgyufghfghjgghgjkhkkGHGHKKokuhhhugugzzkhhbvqkkklkljjkjk},
\er{jmvnvnbccbvhjhjhhjjkhgjgGHKKzzbvqkhhihk} and
\er{jmvnvnbccbvhjhjhhjjkhgjgGHKKzzbvqkhhihkhkhhj} we get
\begin{multline}\label{11fgyufghfghjgghgjkhkkGHGHKKokuhhhugugzzkhhbvqkkklkljjkjkklkljkll;k;k;}
\lim_{\e\to
0^+}\frac{1}{-\ln{\e}}\|u_\e\|^q_{W^{1/q,q}}=\\-\lim_{\e\to
0^+}\int_{\R^N}\int_{\R^N}\frac{\e}{|z|^{N+1}}\Bigg\{\frac{1}{\e^{N}}\int_{\R^N}\bigg(\eta\Big(\frac{y-(x+z)}{\e}\Big)\frac{y-(x+z)}{\e}-
\eta\Big(\frac{y-x}{\e}\Big)\frac{y-x}{\e}\bigg)\cdot
d\big[Du(y)\big]\Bigg\}\times\\
\times\nabla
F_q\Bigg(\int_{\R^N}\eta(\xi)\Big(u(x+z+\e\xi)-u(x+\e\xi)\Big)
d\xi\Bigg)\chi_\Omega(x+z)\chi_\Omega(x) dzdx =\\-\lim_{\e\to
0^+}\int_{\R^N}\int_{\R^N}\int_{\R^N}\frac{\e}{|z|^{N+1}}\frac{1}{\e^{N}}\bigg(\eta\Big(\frac{y-(x+z)}{\e}\Big)\frac{y-(x+z)}{\e}-
\eta\Big(\frac{y-x}{\e}\Big)\frac{y-x}{\e}\bigg)\times\\
\times\nabla
F_q\Bigg(\int_{\R^N}\eta(\xi)\Big(u(x+z+\e\xi)-u(x+\e\xi)\Big)
d\xi\Bigg)\chi_\Omega(x+z)\chi_\Omega(x) dzdx\cdot d\big[Du(y)\big].
\end{multline}
Changing variable, $z/\e\to z$, in the integration on the
R.H.S.~of
\er{11fgyufghfghjgghgjkhkkGHGHKKokuhhhugugzzkhhbvqkkklkljjkjkklkljkll;k;k;}
gives
\begin{multline}\label{fgyufghfghjgghgjkhkkGHGHKKokuhhhugugzzkhhbvqkkklkljjkjkklkljkll;k;k;}
\lim_{\e\to
0^+}\frac{1}{-\ln{\e}}\|u_\e\|^q_{W^{1/q,q}}=\\-\lim_{\e\to
0^+}\int_{\R^N}\int_{\R^N}\int_{\R^N}\frac{1}{|z|^{N+1}}\frac{1}{\e^{N}}\bigg(\eta\Big(\frac{y-x}{\e}-z\Big)\Big(\frac{y-x}{\e}-z\Big)-
\eta\Big(\frac{y-x}{\e}\Big)\frac{y-x}{\e}\bigg)\times\\
\times\nabla F_q\Bigg(\int_{\R^N}\eta(\xi)\Big(u(x+\e
z+\e\xi)-u(x+\e\xi)\Big) d\xi\Bigg)\chi_\Omega(x+\e
z)\chi_\Omega(x)dzdx\cdot d\big[Du(y)\big] =\\-\lim_{\e\to
0^+}\int_{\R^N}\int_{\R^N}\int_{\R^N}\frac{1}{|z|^{N+1}}\bigg(\eta\big(x-z\big)\big(x-z\big)-
\eta\big(x\big)x\bigg)\times\\ \times\nabla
F_q\Bigg(\int_{\R^N}\eta(\xi)\Big(u(y+\e z+\e\xi-\e x)-u(y+\e\xi-\e
x)\Big) d\xi\Bigg)\chi_\Omega(y-\e x+\e z)\chi_\Omega(y-\e x)
dzdx\cdot d\big[Du(y)\big].
\end{multline}
Therefore,
\begin{multline}\label{fgyufghfghjgghgjkhkkGHGHKKokuhhhugugzzkhhbvqkkklkljjkjkklkljkll;k;k;ouuio}
\lim_{\e\to
0^+}\frac{1}{-\ln{\e}}\|u_\e\|^q_{W^{1/q,q}}\\=-\lim_{\e\to
0^+}\int_{\R^N}\int_{\R^N}\int_{\R^N}\frac{1}{|z|^{N+1}}\bigg(\eta\big(x-z\big)\big(x-z\big)-
\eta\big(x\big)x\bigg)\times\\
\times\nabla
F_q\Bigg(\int_{\R^N}\Big(\eta(\xi-z)-\eta(\xi)\Big)u(y+\e\xi-\e
x)d\xi\Bigg)\chi_\Omega(y-\e x+\e z)\chi_\Omega(y-\e x)dzdx\cdot
d\big[Du(y)\big]\\
=-\lim_{\e\to
0^+}\int_{\R^N}\int_{\R^N}\int_{\R^N}\frac{1}{|z|^{N+1}}\bigg(\eta\big(x-z\big)\big(x-z\big)-
\eta\big(x\big)x\bigg)\times\\
\times\nabla
F_q\Bigg(\int_{\R^N}\Big(\eta(\xi+x-z)-\eta(\xi+x)\Big)u(y+\e\xi)d\xi\Bigg)\chi_\Omega(y-\e
x+\e z)\chi_\Omega(y-\e x) dzdx\cdot d\big[Du(y)\big].
\end{multline}
On the other hand, by \er{gjgggghfhf} in the Appendix, for every $x,z\in\R^N$ and
$\mathcal{H}^{N-1}$-a.e. $y\in\R^N$ we have
\begin{multline}\label{fgyufghfghjgghgjkhkkGHGHKKokuhhhugugzzkhhbvqkkklkljjkjkklkljkll;k;k;ouuiojkkjhkhkljljhmhmhjj}
\lim_{\e\to
0^+}\Bigg\{\int_{\R^N}\Big(\eta(\xi+x-z)-\eta(\xi+x)\Big)u(y+\e\xi)d\xi\Bigg\}=\\
u^+(y)\int_{H_+(0,\vec\nu(y))
}\Big(\eta(\xi+x-z)-\eta(\xi+x)\Big)d\xi+u^-(y)\int_{H_-(0,\vec\nu(y))}\Big(\eta(\xi+x-z)-\eta(\xi+x)\Big)d\xi.
\end{multline}
with $H_\pm(x,\vec\nu)$ as defined in \er{HN+} and \er{HN-}. Thus,
since $\|Du\|(\partial\Omega)=0$, by
\er{fgyufghfghjgghgjkhkkGHGHKKokuhhhugugzzkhhbvqkkklkljjkjkklkljkll;k;k;ouuiojkkjhkhkljljhmhmhjj}
and the Dominated Convergence Theorem we obtain:
\begin{multline}\label{11fgyufghfghjgghgjkhkkGHGHKKokuhhhugugzzkhhbvqkkklkljjkjkklkljkll;k;k;ouuiojkkjhkhkljl;klk;k;jhkh}
\lim_{\e\to 0^+}\frac{1}{-\ln{\e}}\|u_\e\|^q_{W^{1/q,q}}
=\\-\int\limits_{\R^N}\int\limits_{\R^N}\int\limits_{\R^N}\frac{1}{|z|^{N+1}}\bigg(\eta\big(x-z\big)\big(x-z\big)-
\eta\big(x\big)x\bigg)
\nabla
F_q\Bigg(u^+(y)\int\limits_{H_+(0,\vec\nu(y))}\Big(\eta(\xi+x-z)-\eta(\xi+x)\Big)d\xi\\+u^-(y)\int\limits_{H_-(0,\vec\nu(y))}\Big(\eta(\xi+x-z)-\eta(\xi+x)\Big)d\xi\Bigg)
\chi^2_\Omega(y) dzdx\cdot d\big[Du(y)\big]=\\
-\int\limits_{\Omega}\int\limits_{\R^N}\int\limits_{\R^N}\frac{1}{|z|^{N+1}}\bigg(\eta\big(x-z\big)\big(x-z\big)-
\eta\big(x\big)x\bigg)  \nabla
F_q\Bigg(u^+(y)\int\limits_{H_+(0,\vec\nu(y))}\Big(\eta(\xi+x-z)-\eta(\xi+x)\Big)d\xi\\+u^-(y)\int\limits_{H_-(0,\vec\nu(y))}\Big(\eta(\xi+x-z)-\eta(\xi+x)\Big)d\xi\Bigg)
dzdx\cdot d\big[Du(y)\big].\\
\end{multline}
It follows that
\begin{multline}\label{huuuuuuuuuuuuuuuuuuuuuuuuuuuuuuuuuufgyfgf}
\lim_{\e\to 0^+}\frac{1}{-\ln{\e}}\|u_\e\|^q_{W^{1/q,q}}=-\int_{\Omega}\int_{\R^N}\int_{\R^N}\frac{1}{|z|^{N+1}}\bigg(\eta\big(x-z\big)\big(x-z\big)-
\eta\big(x\big)x\bigg) \times\\ \times \nabla
F_q\Bigg(\big(u^+(y)-u^-(y)\big)\int_{H_+(0,\vec\nu(y))}\Big(\eta(\xi+x-z)-\eta(\xi+x)\Big)d\xi\\+u^-(y)\int_{\R^N}\Big(\eta(\xi+x-z)-\eta(\xi+x)\Big)d\xi\Bigg)
dzdx\cdot d\big[Du(y)\big]\\
=-\int_{\Omega}\int_{\R^N}\int_{\R^N}\frac{1}{|z|^{N+1}}\bigg(\eta\big(x-z\big)\big(x-z\big)-
\eta\big(x\big)x\bigg) \times\\ \times \nabla
F_q\Bigg(\big(u^+(y)-u^-(y)\big)\int_{H_+(0,\vec\nu(y))}\Big(\eta(\xi+x-z)-\eta(\xi+x)\Big)d\xi\Bigg)dzdx\cdot
d\big[Du(y)\big],
\end{multline}
where we used in the last step the fact that
$\int_{\R^N}\eta(\xi+x-z)d\xi=\int_{\R^N}\eta(\xi+x)d\xi$. Next, by
\er{huuuuuuuuuuuuuuuuuuuuuuuuuuuuuuuuuufgyfgf} and
\er{fhjgjgfgjjkjhhjhj} we infer that
\begin{multline}\label{fgyufghfghjgghgjkhkkGHGHKKokuhhhugugzzkhhbvqkkklkljjkjkklkljkll;k;k;ouuiojkkjhkhkljl;klk;k;jhkhkhkhkjljjk;kk;}
\lim_{\e\to 0^+}\frac{1}{-\ln{\e}}\|u_\e\|^q_{W^{1/q,q}} =
-\int_{\Omega}\int_{\R^N}\int_{\R^N}\frac{1}{|z|^{N+1}}\bigg(\eta\big(x-z\big)\big(x-z\big)-
\eta\big(x\big)x\bigg) \times\\ \times \nabla
F_q\Bigg(\big(u^+(y)-u^-(y)\big)\bigg(\int_{
H_+(x-z,\vec\nu(y))}\eta(\xi)d\xi-\int_{H_+(x,\vec\nu(y))
}\eta(\xi)d\xi\bigg) \Bigg)dzdx\cdot d\big[Du(y)\big]
\\=
\int_{J_u\cap\Omega}\int_{\R^N}\int_{\R^N}\frac{1}{|z|^{N+1}}\bigg(\eta\big(x\big)x\cdot\vec
\nu(y)-\eta\big(x-z\big)\big(x-z\big)\cdot\vec
\nu(y)\bigg) \times\\
\times \frac{dG_q}{d\rho}\Bigg(\int_{(x-z)\cdot\vec
\nu(y)}^{x\cdot\vec \nu(y)}\int_{H_0(\vec\nu(y))
}\eta(t\vec\nu(y)+\xi)d\mathcal{H}^{N-1}(\xi)dt \Bigg)
dxdz\big|u^+(y)-u^-(y)\big|^q d\mathcal{H}^{N-1}(y),
\end{multline}
where $G_q(\rho)\in C^1(\R,\R)$ is defined by
\begin{equation}\label{fhjgjgfgjjkjhhjhjhhjh}
G_q(\rho):=|\rho|^q \quad\quad\forall \rho\in\R,
\end{equation}
and $H_0(\vec\nu)$ is defined in \er{HN}. Therefore,
\begin{multline}\label{fgyufghfghjgghgjkhkkGHGHKKokuhhhugugzzkhhbvqkkklkljjkjkklkljkll;k;k;ouuiojkkjhkhkljl;klk;k;jhkhkhkhkjljjk;kk;jkkhihk}
\lim_{\e\to 0^+}\frac{1}{-\ln{\e}}\|u_\e\|^q_{W^{1/q,q}} =\\
\int_{J_u\cap\Omega}\int_{\R^N}\int_{\R}\int_{H_0(\vec\nu(y))}\frac{1}{|z|^{N+1}}\Bigg(
\eta\big(s\vec\nu(y)+\zeta\big)s
-\eta\Big(\big(s-z\cdot\vec\nu(y)\big)\vec\nu(y)+\zeta\Big)\big(s-z\cdot\vec
\nu(y)\big)\Bigg) \times\\
\times \frac{dG_q}{d\rho}\Bigg(\int_{s-z\cdot\vec
\nu(y)}^{s}\int_{H_0(\vec\nu(y))}\eta(t\vec\nu(y)+\xi)d\mathcal{H}^{N-1}(\xi)dt
\Bigg)
d\mathcal{H}^{N-1}(\zeta)dsdz
\big|u^+(y)-u^-(y)\big|^q d\mathcal{H}^{N-1}(y) \\
=
\int_{J_u\cap\Omega}\Bigg(\int_{\R^{N-1}}\int_{\R}\int_{\R}\frac{1}{\big(\sqrt{\tau^2+|w|^2}\big)^{N+1}}\times\\
\times\Bigg( \int_{H_0(\vec\nu(y))}\bigg(
\eta\big(s\vec\nu(y)+\zeta\big)s
-\eta\Big(\big(s-\tau\big)\vec\nu(y)+\zeta\Big)\big(s-\tau\big)\bigg)d\mathcal{H}^{N-1}(\zeta)\Bigg) \times\\
\times
\frac{dG_q}{d\rho}\bigg(\int_{s-\tau}^{s}\int_{H_0(\vec\nu(y))}\eta(t\vec\nu(y)+\xi)d\mathcal{H}^{N-1}(\xi)dt
\bigg)d\tau dsdw\Bigg)\big|u^+(y)-u^-(y)\big|^q
d\mathcal{H}^{N-1}(y)\,.
\end{multline}
Introducing the notation
\begin{equation}
  \label{eq:1}
  \Lambda(y,a,b)=\int_a^b \int_{H_0(\vec\nu(y))}\eta(t\vec\nu(y)+\xi)\,d\mathcal{H}^{N-1}(\xi)\,dt
\end{equation}
allows us to rewrite
\eqref{fgyufghfghjgghgjkhkkGHGHKKokuhhhugugzzkhhbvqkkklkljjkjkklkljkll;k;k;ouuiojkkjhkhkljl;klk;k;jhkhkhkhkjljjk;kk;jkkhihk}
as
\begin{multline}\label{fjhgjghkhkljhljljljlj}
\lim_{\e\to 0^+}\frac{1}{-\ln{\e}}\|u_\e\|^q_{W^{1/q,q}}=\\
\int_{J_u\cap\Omega}\Bigg\{\int_{\R^{N-1}}\int_{\R}\int_{\R}\frac{1}{\tau^2}\frac{1}{|\tau|^{N-1}}\frac{1}{\big(\sqrt{1+|w/|\tau||^2}\big)^{N+1}}\times\\
\Bigg(\int_{H_0(\vec\nu(y))}\bigg( \eta\big(s\vec\nu(y)+\zeta\big)s
-\eta\Big(\big(s-\tau\big)\vec\nu(y)+\zeta\Big)\big(s-\tau\big)\bigg)d\mathcal{H}^{N-1}(\zeta)\Bigg) \times\\
\times \frac{dG_q}{d\rho}\Big(\Lambda(y,s-\tau,s)\Big)d\tau
dsdw\Bigg\}\big|u^+(y)-u^-(y)\big|^q d\mathcal{H}^{N-1}(y).
\end{multline}
The change of variables $w/|\tau|\to v$ in the R.H.S.~of
\er{fjhgjghkhkljhljljljlj} gives
\begin{multline}\label{fgyufghfghjgghgjkhkkGHGHKKokuhhhugugzzkhhbvqkkklkljjkjkklkljkll;k;k;ouuiojkkjhkhkljl;klk;k;jhkhkhkhkjljjk;kk;jkkhihkjljllmnjghghg}
\lim_{\e\to 0^+}\frac{1}{-\ln{\e}}\|u_\e\|^q_{W^{1/q,q}} =\\
D_N\int_{J_u\cap\Omega}\Bigg(\int_{\R}\int_{\R}\frac{1}{\tau^2}\Bigg(
\int_{H_0(\vec\nu(y))}\bigg( \eta\big(s\vec\nu(y)+\zeta\big)s
-\eta\Big(\big(s-\tau\big)\vec\nu(y)+\zeta\Big)\big(s-\tau\big)\bigg)d\mathcal{H}^{N-1}(\zeta)\Bigg) \times\\
\times \frac{dG_q}{d\rho}\Big(\Lambda(y,s-\tau,s)\Big) d\tau
ds\Bigg)\big|u^+(y)-u^-(y)\big|^q d\mathcal{H}^{N-1}(y),
\end{multline}
where $D_N$ is the dimensional constant given by
\begin{equation}\label{fgyufghfghjgghgjkhkkGHGHKKokuhhhugugzzkhhbvqkkklkljjkjkklkljkll;k;k;ouuiojkkjhkhkljl;klk;k;jhkhkhkhkjljjk;kk;jkkhihkjljllmnjghghgjkhjh}
D_N:=\int_{\R^{N-1}}\frac{1}{\big(\sqrt{1+|v|^2}\big)^{N+1}}dv.
\end{equation}
%
%
%
%
%
%
Then we rewrite
\er{fgyufghfghjgghgjkhkkGHGHKKokuhhhugugzzkhhbvqkkklkljjkjkklkljkll;k;k;ouuiojkkjhkhkljl;klk;k;jhkhkhkhkjljjk;kk;jkkhihkjljllmnjghghg}
as
\begin{multline}\label{fgyufghfghjgghgjkhkkGHGHKKokuhhhugugzzkhhbvqkkklkljjkjkklkljkll;k;k;ouuiojkkjhkhkljl;klk;k;jhkhkhkhkjljjk;kk;jkkhihkjljllmnjghghgjhjhjh}
\lim_{\e\to 0^+}\frac{1}{-\ln{\e}}\|u_\e\|^q_{W^{1/q,q}} =\\
\lim_{M\to
+\infty}\Bigg(D_N\int_{J_u\cap\Omega}\Bigg(\int_{\R}\int_{-M}^{M}\frac{1}{\tau^2}\bigg(
\int_{H_0(\vec\nu(y))}
s\Big(\eta\big(s\vec\nu(y)+\zeta\big)-\eta\big((s-\tau)\vec\nu(y)+\zeta\big)\Big)d\mathcal{H}^{N-1}(\zeta)\bigg) \times\\
\times
\frac{dG_q}{d\rho} \Big(\Lambda(y,s-\tau,s)\Big) d\tau ds\Bigg)\big|u^+(y)-u^-(y)\big|^q d\mathcal{H}^{N-1}(y)\\
+D_N\int_{J_u\cap\Omega}\Bigg(\int_{\R}\int_{-M}^{M}\frac{1}{\tau}\bigg(
\int_{H_0(\vec\nu(y))}
\eta\big((s-\tau)\vec\nu(y)+\zeta\big)d\mathcal{H}^{N-1}(\zeta)\bigg) \times\\
\times \frac{dG_q}{d\rho} \Big(\Lambda(y,s-\tau,s)\Big) d\tau
ds\Bigg)\big|u^+(y)-u^-(y)\big|^q d\mathcal{H}^{N-1}(y)\Bigg).
\end{multline}
Integration by parts of
\er{fgyufghfghjgghgjkhkkGHGHKKokuhhhugugzzkhhbvqkkklkljjkjkklkljkll;k;k;ouuiojkkjhkhkljl;klk;k;jhkhkhkhkjljjk;kk;jkkhihkjljllmnjghghgjhjhjh}
and using \er{fhjgjgfgjjkjhhjhjhhjh} give
\begin{multline}\label{11aafgyufghfghjgghgjkhkkGHGHKKokuhhhugugzzkhhbvqkkklkljjkjkklkljkll;k;k;ouuiojkkjhkhkljl;klk;k;jhkhkhkhkjljjk;kk;jkkhihkjljllmjhnjghghgjhjhjhhjhj}
\lim_{\e\to 0^+}\frac{1}{-\ln{\e}}\|u_\e\|^q_{W^{1/q,q}} =\\
-\lim_{M\to +\infty}D_N\int_{J_u\cap \Omega}\big|u^+(y)-u^-(y)\big|^q\Bigg(
\int_{\R}\int_{-M}^{M}\frac{1}{\tau^2}
\Big|\Lambda(y,s-\tau,s)
\Big|^qd\tau ds\Bigg) d\mathcal{H}^{N-1}(y)\\
+ \lim_{M\to +\infty}D_N\int_{J_u\cap\Omega}\Bigg(
\int_{\R}\int_{-M}^{M}\frac{1}{\tau^2} \Big|\Lambda(y,s-\tau,s)
\Big|^qd\tau ds\Bigg)\big|u^+(y)-u^-(y)\big|^q
d\mathcal{H}^{N-1}(y)\\ + \lim_{M\to
+\infty}\frac{D_N}{M}\int_{J_u\cap\Omega}\Bigg(
\int_{\R}\Big|\Lambda(y,s-M,s)\Big|^qds
+\int_{\R}\Big|\Lambda(y,s,s+M)\Big|^q
ds\Bigg)\big|u^+(y)-u^-(y)\big|^q d\mathcal{H}^{N-1}(y)\\
=\lim_{M\to
+\infty}\frac{D_N}{M}\int_{J_u\cap\Omega}\Bigg(
\int_{\R}\Big|\Lambda(y,s-M,s)\Big|^qds
+\int_{\R}\Big|\Lambda(y,s,s+M)\Big|^q
ds\Bigg)\big|u^+(y)-u^-(y)\big|^q d\mathcal{H}^{N-1}(y).
\end{multline}
Therefore, applying  L'H\^{o}pital's rule in
\er{11aafgyufghfghjgghgjkhkkGHGHKKokuhhhugugzzkhhbvqkkklkljjkjkklkljkll;k;k;ouuiojkkjhkhkljl;klk;k;jhkhkhkhkjljjk;kk;jkkhihkjljllmjhnjghghgjhjhjhhjhj},
using
\er{fhjgjgfgjjkjhhjhjhhjh}, we
deduce that
\begin{multline}\label{11ddfgyufghfghjgghgjkhkkGHGHKKokuhhjujkhhhugugzzkhhbvqkkklkljjkjkjhjgklkljkll;k;k;ouuiojkkjhkhkljl;klk;k;jhkhkhjjkhkjljjk;kk;jkkhihkjljllmjhnjghghgjhjhjhhjhj}
\lim_{\e\to 0^+}\frac{1}{-\ln{\e}}\|u_\e\|^q_{W^{1/q,q}} =\\
\lim_{M\to
+\infty}D_N\int_{J_u\cap\Omega}\Bigg(
\int_{\R}
\frac{dG_q}{d\rho}\Big(\Lambda(y,s-M,s)\Big)
\bigg(\int_{H_0(\vec\nu(y))}\eta\big((s-M)\vec\nu(y)+\xi\big)d\mathcal{H}^{N-1}(\xi)\bigg)ds
\\+\int_{\R}\frac{dG_q}{d\rho}\Big(\Lambda(y,s,s+M)\Big)\bigg(\int_{H_0(\vec\nu(y))}\eta\big((s+M)\vec\nu(y)+\xi\big)d\mathcal{H}^{N-1}(\xi)\bigg)\Bigg)
ds\\ \times\big|u^+(y)-u^-(y)\big|^q d\mathcal{H}^{N-1}(y).
\end{multline}
Changing variables of integration we rewrite
\er{11ddfgyufghfghjgghgjkhkkGHGHKKokuhhjujkhhhugugzzkhhbvqkkklkljjkjkjhjgklkljkll;k;k;ouuiojkkjhkhkljl;klk;k;jhkhkhjjkhkjljjk;kk;jkkhihkjljllmjhnjghghgjhjhjhhjhj}
as
\begin{multline}\label{2211ddfgyufghfghjgghgjkhkkGHGHKKokuhhjujkhhhugugzzkhhbvqkkklkljjkjkjhjgklkljkll;k;k;ouuiojkkjhkhkljl;klk;k;jhkhkhjjkhkjljjk;kk;jkkhihkjljllmjhnjghghgjhjhjhhjhj}
\lim_{\e\to 0^+}\frac{1}{-\ln{\e}}\|u_\e\|^q_{W^{1/q,q}} =\\
\lim_{M\to +\infty}D_N\int_{J_u\cap\Omega}\Bigg(
\int_{\R}\frac{dG_q}{d\rho}\Big(\Lambda(y,s,s+M)\Big)
\bigg(\int_{H_0(\vec\nu(y))}\eta\big(s\vec\nu(y)+\xi\big)d\mathcal{H}^{N-1}(\xi)\bigg)ds\\
+\int_{\R}\frac{dG_q}{d\rho}\Big(\Lambda(y,s-M,s)\Big)\bigg(\int_{H_0(\vec\nu(y))}\eta\big(s\vec\nu(y)+\xi\big)d\mathcal{H}^{N-1}(\xi)\bigg)
ds\Bigg)\\
\times\big|u^+(y)-u^-(y)\big|^q d\mathcal{H}^{N-1}(y)\\
=
D_N\int_{J_u\cap\Omega}\Bigg(
\int_{\R}\frac{dG_q}{d\rho}\Big(\Lambda(y,s,\infty)\Big)
\bigg(\int_{H_0(\vec\nu(y))}\eta\big(s\vec\nu(y)+\xi\big)d\mathcal{H}^{N-1}(\xi)\bigg)ds\\
+\int_{\R}\frac{dG_q}{d\rho}\Big(\Lambda(y,-\infty,s)\Big)\bigg(\int_{H_0(\vec\nu(y))}\eta\big(s\vec\nu(y)+\xi\big)d\mathcal{H}^{N-1}(\xi)\bigg)
ds\Bigg)\\
\times\big|u^+(y)-u^-(y)\big|^q d\mathcal{H}^{N-1}(y).
\end{multline}
 Applying Newton-Leibniz formula in
\er{2211ddfgyufghfghjgghgjkhkkGHGHKKokuhhjujkhhhugugzzkhhbvqkkklkljjkjkjhjgklkljkll;k;k;ouuiojkkjhkhkljl;klk;k;jhkhkhjjkhkjljjk;kk;jkkhihkjljllmjhnjghghgjhjhjhhjhj}
and using \er{fhjgjgfgjjkjhhjhjhhjh} we obtain that
\begin{multline}\label{fgyufghfghjgghgjkhkkGHGHKKokuhhjujkjhjhhhhugugzzkhhbvqkkklkljjkjkjhjgklkljkll;k;k;ouuiojkkjhkhkljl;klk;k;jhkhkhjjkhkjljjk;kk;jkkhihkjljllmjhnjghghgjhjhjhhjhj}
\lim_{\e\to
0^+}\frac{1}{-\ln{\e}}\|u_\e\|^q_{W^{1/q,q}}=\\2D_N\int_{J_u\cap\Omega}
\bigg|\int_{-\infty}^{\infty}\int_{H_0(\vec\nu(y))}\eta(t\vec\nu(y)+\xi)d\mathcal{H}^{N-1}(\xi)dt\bigg|^q
\big|u^+(y)-u^-(y)\big|^q d\mathcal{H}^{N-1}(y)\\=
2D_N\bigg|\int_{\R^N}\eta(z)dz\bigg|^q\int_{J_u\cap\Omega}
\big|u^+(y)-u^-(y)\big|^q d\mathcal{H}^{N-1}(y)\,,
\end{multline}
and
\er{fgyufghfghjgghgjkhkkGHGHKKokuhhjujkjhjhhhhugugzzkhhbvqkkklkljjkjkjhjgklkljkll;k;k;ouuiojkkjhkhkljl;klk;k;jhkhkhjjkhkjljjk;kk;jkkhihkjljllmjhnjghghgjhjhjhhjhjRR} follows.
\end{proof}
\begin{corollary}\label{jbnvjnnjvnvnnbRR}
Let $q>1$ and let $\Omega\subset\R^N$ be an open set. Assume
$W:\R^d\times\R^N\to\R$ is a Borel measurable function such
that, $W(0,\cdot)\in L^1(\Omega,\R)$ and for every $D>0$ there exists
$C:=C_D>0$ such that
\begin{equation}\label{jmvnvnbccbvhjhjhhjjkhgjgGHKKzzbvqkhhihkRRruuyrfhjkkhhhhhj}
\big|W(b,x)-W(a,x)\big|\leq C_D|b-a|\quad\quad\forall
x\in\R^N,\;\forall\, a,b\in B_D(0).
\end{equation}
Let $u\in BV(\R^N,\R^d)\cap L^\infty(\R^N,\R^d)$ be
such that $\|Du\|(\partial\Omega)=0$ and $W\big(u(x),x\big)=0$ a.e.
in $\Omega$.
Let $\eta\in C^\infty_c(\R^N,\R)$ be such that
$\int_{\R^N}\eta(z)dz=1$ and $\supp\eta\subset B_R(0)$. For every
$\rho>0$ set
\begin{equation}\label{jmvnvnbccbvhjhjhhjjkhgjgGHKKzzbvqkhhihkRRruuyrfh}
\eta_{\rho}(z):=\frac{1}{\rho^N}\eta\Big(\frac{z}{\rho}\Big)\quad\forall
z\in\R^N.
\end{equation}
Finally, for every $x\in\R^N$ and every $\e>0$ define
\begin{equation}\label{jmvnvnbccbvhjhjhhjjkhgjgGHKKzzbvqkhhihkRR}
u_{\rho,\e}(x):=\frac{1}{\e^N}\int_{\R^N}\eta_\rho\Big(\frac{y-x}{\e}\Big)u(y)dy=\int_{\R^N}\eta(z)u(x+\e\rho
z)dz=\int_{B_R(0)}\eta(z)u(x+\e \rho z)dz.
\end{equation}
Then,
\begin{multline}\label{fgyufghfghjgghgjkhkkGHGHKKokuhhjujkjhjhhhhugugzzkhhbvqkkklkljjkjkjhjgklkljkll;k;k;ouuiojkkjhkhkljl;klk;k;jhkhkhjjkhkjljjk;kk;jkkhihkjljllmjhnjghghgjhjhjhhjhjRRRR}
\lim_{\rho\to 0^+}\Bigg\{\limsup_{\e\to
0^+}\bigg(\frac{1}{-\ln{(\e)}}\Big(\|u_{\rho,\e}\|^q_{W^{1/q,q}(\R^N,\R^d)}-\|u_{\rho,\e}\|^q_{W^{1/q,q}(\R^N\setminus\ov\Omega,\R^d)}\Big)
+\frac{1}{\e}\int_{\Omega}W\Big(u_{\rho,\e}(x),x\Big)dx
\bigg)\Bigg\}\\=\lim_{\rho\to 0^+}\Bigg\{\limsup_{\e\to
0^+}\bigg(\frac{1}{-\ln{(\e)}}\|u_{\rho,\e}\|^q_{W^{1/q,q}(\Omega,\R^d)}+\frac{1}{\e}\int_{\Omega}W\Big(u_{\rho,\e}(x),x\Big)dx
\bigg)\Bigg\}
\\=
\Bigg(\int_{\R^{N-1}}\frac{2}{\big(\sqrt{1+|v|^2}\big)^{N+1}}dv\Bigg)\int_{J_u\cap\Omega}
\big|u^+(y)-u^-(y)\big|^q d\mathcal{H}^{N-1}(y).
\end{multline}
\end{corollary}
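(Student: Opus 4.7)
The starting observation is that $u_{\rho,\e}$ as defined in \er{jmvnvnbccbvhjhjhhjjkhgjgGHKKzzbvqkhhihkRR} is exactly the mollification of $u$ with kernel $\eta$ at scale $\e\rho$, so one may apply \rprop{jbnvjnnjvnvnnb} with $\e\rho$ in place of $\e$. Using $\int_{\R^N}\eta = 1$, this gives
\begin{equation*}
    \lim_{\e \to 0^+} \frac{1}{|\ln(\e\rho)|}\,\|u_{\rho,\e}\|^q_{W^{1/q,q}(\Omega,\R^d)} = 2D_N \int_{J_u \cap \Omega} |u^+(y)-u^-(y)|^q\,d\mathcal{H}^{N-1}(y),
\end{equation*}
where $D_N := \int_{\R^{N-1}} (\sqrt{1+|v|^2})^{-(N+1)}\,dv$. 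For each fixed $\rho>0$ the ratio $|\ln(\e\rho)|/|\ln \e|$ tends to $1$ as $\e \to 0^+$, so the same limit is obtained with $|\ln \e|$ in the denominator, and it is independent of $\rho$.

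For the ``difference of norms'' expression I would apply \rprop{jbnvjnnjvnvnnb} twice, with $\Omega$ replaced by $\R^N$ (the boundary hypothesis is vacuous) and by $A := \R^N \setminus \ov\Omega$ (legitimate since $\partial A \subset \partial\Omega$, hence $\|Du\|(\partial A) = 0$). Subtracting the two limits gives $2D_N\int_{J_u\cap\ov\Omega}|u^+-u^-|^q\,d\mathcal{H}^{N-1}$. Now the hypothesis $\|Du\|(\partial\Omega) = 0$ together with $u\in L^\infty$ forces $\int_{J_u\cap\partial\Omega}|u^+-u^-|^q\,d\mathcal{H}^{N-1}=0$ (because $\int_{J_u\cap\partial\Omega}|u^+-u^-|\,d\mathcal{H}^{N-1}$ is the mass of the jump part of $Du$ on $\partial\Omega$), so this reduces to the integral over $J_u\cap\Omega$. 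Hence both $W^{1/q,q}$-contributions appearing on the L.H.S.~of \er{fgyufghfghjgghgjkhkkGHGHKKokuhhjujkjhjhhhhugugzzkhhbvqkkklkljjkjkjhjgklkljkll;k;k;ouuiojkkjhkhkljl;klk;k;jhkhkhjjkhkjljjk;kk;jkkhihkjljllmjhnjghghgjhjhjhhjhjRRRR} have the same limit in $\e$, which is $\rho$-independent.

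To handle the potential term I would use that $\|u_{\rho,\e}\|_{L^\infty} \le \|\eta\|_{L^1}\|u\|_\infty$ uniformly in $\rho,\e$, so the local Lipschitz bound \er{jmvnvnbccbvhjhjhhjjkhgjgGHKKzzbvqkhhihkRRruuyrfhjkkhhhhhj} applies with a constant $C$ independent of $\rho,\e$. Since $W(u(\cdot),\cdot) = 0$ a.e.~in $\Omega$,
\begin{equation*}
    \int_\Omega W(u_{\rho,\e}(x),x)\,dx \le C \int_\Omega |u_{\rho,\e}(x)-u(x)|\,dx.
\end{equation*}
The standard $BV$-estimate $\int_{\R^N}|u(x+h)-u(x)|\,dx \le |h|\,\|Du\|(\R^N)$, combined with Fubini and $\supp\eta\subset B_R(0)$, yields $\|u_{\rho,\e}-u\|_{L^1(\R^N)} \le C'\e\rho\,\|Du\|(\R^N)$, whence
\begin{equation*}
    \frac{1}{\e}\int_\Omega W(u_{\rho,\e}(x),x)\,dx \le C''\rho\,\|Du\|(\R^N) \quad \text{for every } \e>0.
\end{equation*}

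Combining these ingredients, since the $W^{1/q,q}$-parts have honest limits in $\e$ and $W\ge 0$, the $\limsup_{\e\to 0^+}$ of either expression on the L.H.S.~of \er{fgyufghfghjgghgjkhkkGHGHKKokuhhjujkjhjhhhhugugzzkhhbvqkkklkljjkjkjhjgklkljkll;k;k;ouuiojkkjhkhkljl;klk;k;jhkhkhjjkhkjljjk;kk;jkkhihkjljllmjhnjghghgjhjhjhhjhjRRRR} lies between $2D_N \int_{J_u\cap\Omega}|u^+-u^-|^q\,d\mathcal{H}^{N-1}$ and that quantity plus $C''\rho\,\|Du\|(\R^N)$; sending $\rho \to 0^+$ yields the claimed equality. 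The main delicate point is the boundary bookkeeping of the second paragraph, namely checking that \rprop{jbnvjnnjvnvnnb} is applicable to $\R^N\setminus\ov\Omega$ and that the jump contribution over $\partial\Omega$ vanishes; both rely crucially on $\|Du\|(\partial\Omega) = 0$.
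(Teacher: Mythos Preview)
Your argument is correct and follows essentially the same route as the paper: apply \rprop{jbnvjnnjvnvnnb} to handle the $W^{1/q,q}$-terms for $\R^N$, $\R^N\setminus\ov\Omega$, and $\Omega$, then bound the potential term by $O(\rho)$ via the local Lipschitz hypothesis and the $BV$ translation estimate. Two minor remarks. First, the paper avoids your ratio argument $|\ln(\e\rho)|/|\ln\e|\to 1$ by applying \rprop{jbnvjnnjvnvnnb} directly with the kernel $\eta_\rho$ (which still lies in $C^\infty_c$ and has integral $1$), so the denominator $|\ln\e|$ appears immediately; your variant is equally valid. Second, the Corollary does \emph{not} assume $W\ge 0$, so your sandwich in the last paragraph needs a lower bound you have not justified. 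The fix is immediate: since $W(u(x),x)=0$ a.e.~in $\Omega$, the Lipschitz bound already gives the two-sided estimate $\big|W(u_{\rho,\e}(x),x)\big|\le C\,|u_{\rho,\e}(x)-u(x)|$, whence $\big|\frac{1}{\e}\int_\Omega W(u_{\rho,\e},x)\,dx\big|\le C''\rho\,\|Du\|(\R^N)$, and no sign assumption on $W$ is needed.
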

\begin{proof}
Since $\int_{\R^N}\eta_\rho(z)dz=1$, applying Proposition
\ref{jbnvjnnjvnvnnb}, first for $\R^N$, then for
$\R^N\setminus\ov\Omega$, and finally for $\Omega$, yields, for every $\rho>0$,
\begin{multline}\label{fgyufghfghjgghgjkhkkGHGHKKokuhhjujkjhjhhhhugugzzkhhbvqkkklkljjkjkjhjgklkljkll;k;k;ouuiojkkjhkhkljl;klk;k;jkjjhkhkhjjkhkjljjk;kk;jkkhihkjljllmjhnjghghgjhjhjhhjhjRRRRhjjgj}
\lim_{\e\to0^+}
\frac{1}{-\ln{(\e)}}\bigg(\|u_{\rho,\e}\|^q_{W^{1/q,q}(\R^N,\R^d)}-\|u_{\rho,\e}\|^q_{W^{1/q,q}(\R^N\setminus\ov\Omega,\R^d)}
\bigg)
\\=2D_N\left(\int_{J_u}
\big|u^+(y)-u^-(y)\big|^q d\mathcal{H}^{N-1}(y)-\int_{J_u\cap(\R^N\setminus\ov\Omega)}
\big|u^+(y)-u^-(y)\big|^q d\mathcal{H}^{N-1}(y)\right)\\
=2D_N \int_{J_u\cap\Omega} \big|u^+(y)-u^-(y)\big|^q
d\mathcal{H}^{N-1}(y)= \lim_{\e\to
0^+}\bigg(\frac{1}{-\ln{(\e)}}\|u_{\rho,\e}\|^q_{W^{1/q,q}(\Omega,\R^d)}
\bigg),
\end{multline}
 where $D_N$ is the constant defined in \er{fgyufghfghjgghgjkhkkGHGHKKokuhhhugugzzkhhbvqkkklkljjkjkklkljkll;k;k;ouuiojkkjhkhkljl;klk;k;jhkhkhkhkjljjk;kk;jkkhihkjljllmnjghghgjkhjh}.
On the other hand, since $W\big(u(x),x\big)=0$ a.e. in $\Omega$ and
$u\in L^\infty$, by
\er{jmvnvnbccbvhjhjhhjjkhgjgGHKKzzbvqkhhihkRRruuyrfhjkkhhhhhj} we
 get that
\begin{multline}\label{ffffgyufghfghjgghgjkhkkGHGHKKokuhhjujkjhjhhhhugugzzkhhbvqkkklkljjkjkjhjgklkljkll;k;k;ouuiojkkjhkhkljl;klk;k;jhkhkhjjkhkjljjk;kk;jkkhihkjljllmjhnjghghgjhjhjhhjhjRRRRjkhjh}
\bigg|\frac{1}{\e}\int_{\Omega}W\Big(u_{\rho,\e}(x),x\Big)dx
\bigg|=\bigg|\frac{1}{\e}\int_{\Omega}\Big(W\big(u_{\rho,\e}(x),x\big)-W\big(u(x),x\big)\Big)dx
\bigg| \leq
C\int_{\R^N}\frac{1}{\e}\Big|u_{\rho,\e}(x)-u(x)\Big|dx\\ \leq
C\int_{B_R(0)}\big|\eta(z)\big|\Bigg(\int_{\R^N}\frac{1}{\e}\Big|u(x+\e\rho
z)-u(x)\Big|dx\Bigg)dz
\\=
C\rho\int_{B_R(0)}|z|\big|\eta(z)\big|\Bigg(\int_{\R^N}\frac{1}{\e\rho|z|}\Big|u(x+\e\rho
z)-u(x)\Big|dx\Bigg)dz,
\end{multline}
for some constant $C>0$, independent of $\e$ and $\rho$. Thus, taking
into account the following well known uniform bound from the theory
of $BV$ functions,
\begin{equation}\label{jiouiuuiui}
\int_{\R^N}\frac{1}{\rho\e|z|}\Big|u(x+\rho\e z)-u(x)\Big|dx\leq
C_0\|Du\|(\R^N)\quad\forall
z\in\R^N,\,\forall\rho,\,\e>0,
\end{equation} we
obtain that
\begin{equation}\label{fgyufghfghjgghgjkhkkGHGHKKokuhhjujkjhjhhhhugugzzkhhbvqkkklkljjkjkjhjgklkljkll;k;k;ouuiojkkjhkhkljl;klk;k;jhkhkhjjkhkjljjk;kk;jkkhihkjljllmjhnjghghgjhjhjhhjhjRRRRjkhjh}
\limsup_{\e\to
0^+}\bigg|\frac{1}{\e}\int_{\Omega}W\Big(u_{\rho,\e}(x),x\Big)dx
\bigg|\leq
CC_0\|Du\|(\R^N)\rho\int_{B_R(0)}|z|\big|\eta(z)\big|dz=O(\rho).
\end{equation}
By
\er{fgyufghfghjgghgjkhkkGHGHKKokuhhjujkjhjhhhhugugzzkhhbvqkkklkljjkjkjhjgklkljkll;k;k;ouuiojkkjhkhkljl;klk;k;jhkhkhjjkhkjljjk;kk;jkkhihkjljllmjhnjghghgjhjhjhhjhjRRRRjkhjh}
and
\er{fgyufghfghjgghgjkhkkGHGHKKokuhhjujkjhjhhhhugugzzkhhbvqkkklkljjkjkjhjgklkljkll;k;k;ouuiojkkjhkhkljl;klk;k;jkjjhkhkhjjkhkjljjk;kk;jkkhihkjljllmjhnjghghgjhjhjhhjhjRRRRhjjgj}
we finally derive
\er{fgyufghfghjgghgjkhkkGHGHKKokuhhjujkjhjhhhhugugzzkhhbvqkkklkljjkjkjhjgklkljkll;k;k;ouuiojkkjhkhkljl;klk;k;jhkhkhjjkhkjljjk;kk;jkkhihkjljllmjhnjghghgjhjhjhhjhjRRRR}.
\end{proof}

\begin{proof}[Proof of Theorem \ref{jbnvjnnjvnvnnbRRkkjj}]
Let $\eta,\eta_\rho$ and $u_{\rho,\e}$ be defined as in Corollary
\ref{jbnvjnnjvnvnnbRR}. Then $u_{\rho,\e}\in C^\infty(\R^N,\R^d)\cap
W^{1,1}(\R^N,\R^d)\cap W^{1,\infty}(\R^N,\R^d)$ and by Corollary
\ref{jbnvjnnjvnvnnbRR} we have
\begin{multline}\label{fgyufghfghjgghgjkhkkGHGHKKokuhhjujkjhjhhhhugugzzkhhbvqkkklkljjkjkjhjgklkljkll;k;k;ouuiojkkjhkhkljl;klk;k;jhkhkhjjkhkjljjk;kk;jkkhihkjljllmjhnjghghgjhjhjhhjhjRRRRkkijhjh}
\lim_{\rho\to 0^+}\Bigg\{\limsup_{\e\to
0^+}\bigg(\frac{1}{-\ln{(\e)}}\Big(\|u_{\rho,\e}\|^q_{W^{1/q,q}(\R^N,\R^d)}-\|u_{\rho,\e}\|^q_{W^{1/q,q}(\R^N\setminus\ov\Omega,\R^d)}\Big)
+\frac{1}{\e}\int_{\Omega}W\Big(u_{\rho,\e}(x),x\Big)dx
\bigg)\Bigg\}\\=\lim_{\rho\to 0^+}\Bigg\{\limsup_{\e\to
0^+}\bigg(\frac{1}{-\ln{\e}}\|u_{\rho,\e}\|^q_{W^{1/q,q}(\Omega,\R^d)}+\frac{1}{\e}\int_{\Omega}W\Big(u_{\rho,\e}(x),x\Big)dx
\bigg)\Bigg\}
\\=
\Bigg(\int_{\R^{N-1}}\frac{2}{\big(\sqrt{1+|v|^2}\big)^{N+1}}dv\Bigg)\int_{J_u\cap\Omega}
\big|u^+(y)-u^-(y)\big|^q d\mathcal{H}^{N-1}(y).
\end{multline}
Clearly, for every $x\in\R^N$ we have $A\cdot\nabla
u_{\rho,\e}(x)=0$ and $u_{\rho,\e}(x)\to u(x)$ strongly in
$L^p(\R^N,\R^d)$ as $\e\to 0^+$ for every fixed $\rho$ and $p$.
Therefore, by the above and by
\er{fgyufghfghjgghgjkhkkGHGHKKokuhhjujkjhjhhhhugugzzkhhbvqkkklkljjkjkjhjgklkljkll;k;k;ouuiojkkjhkhkljl;klk;k;jhkhkhjjkhkjljjk;kk;jkkhihkjljllmjhnjghghgjhjhjhhjhjRRRRkkijhjh}
we can complete the proof of the first assertion of the theorem using a standard
diagonal argument.

It remains to show the second assertion of the theorem, namely, that
in the case $A\equiv 0$  we can construct $\psi_\e$ satisfying the
additional condition \er{jmvnvnbccbvhjhjhhjjkhgjgGHKKzzbvqkhhihkRRkkhihhjhjhjhkk}. Let $\varphi\in C_c^\infty(\R^N,\R)$ be such that
$\int_\Omega\varphi(x)dx=1$. Define
\begin{equation}\label{jmvnvnbccbvhjhjhhjjkhgjgGHKKzzbvqkhhihkRRkkhihh}
\tilde u_{\rho,\e}(x):=u_{\rho,\e}(x)-\varphi(x)c_{\e,\rho},
\end{equation}
where
\begin{equation}\label{jmvnvnbccbvhjhjhhjjkhgjgGHKKzzbvqkhhihkRRkkhihhjjj}
c_{\e,\rho}:=\int_\Omega u_{\rho,\e}(y)dy-\int_\Omega u(y)dy.
\end{equation}
In particular,
\begin{equation}\label{jmvnvnbccbvhjhjhhjjkhgjgGHKKzzbvqkhhihkRRkkhihhjhjhjhjkhhj}
\int_\Omega \tilde u_{\rho,\e}(x)dx=\int_\Omega u(x)dx,
\end{equation}
and $\lim_{\e\to 0^+}c_{\e,\rho}=0$. On the other hand, since
$W\big(u(x),x\big)=0$ a.e. in $\Omega$, $W(b,x)$ is nonnegative and
$W(b,x)$ is differentiable with respect to the $b$ variable, we have
\begin{equation}\label{jmvnvnbccbvhjhjhhjjkhgjgGHKKzzbvqkhhihkRRkkhihhjhjhjhjkhhjhhyfyfy}
\nabla_b W\big(u(x),x\big)=0\quad\text{a.e. in }\Omega.
\end{equation}
Thus, since $u\in L^\infty$, by
\er{jmvnvnbccbvhjhjhhjjkhgjgGHKKzzbvqkhhihkRRkkhihh} we get that
\begin{multline}\label{ffffgyufghfghjgghgjkhkkGHGHKKokuhhjujkjhjhhhhugugzzkhhbvqkjijjikkklkljjkjkjhjgklkljkll;k;k;ouuiojkkjhkhkljl;klk;k;jhkhkhjjkhkjljjk;kk;jkkhihkjljllmjhnjghghgjhjhjhhjhjRRRRjkhjh}
\bigg|\frac{1}{\e}\int\limits_{\Omega}\Big(W\big(\tilde
u_{\rho,\e}(x),x\big)-W\big(u_{\rho,\e}(x),x\big)\Big)dx \bigg|=
\Bigg|\frac{c_{\e,\rho}}{\e}\cdot\int\limits_0^1\int\limits_{\Omega}\nabla_b
W\Big(u_{\rho,\e}(x)-s\varphi(x)c_{\e,\rho},x\Big)\varphi(x)dx
ds\Bigg|
\\
\leq
C\Bigg(\int_{\R^N}\frac{1}{\e}\Big|u_{\rho,\e}(x)-u(x)\Big|dx\Bigg)\Bigg|\int_0^1\int_{\Omega}\nabla_b
W\Big(u_{\rho,\e}(x)-s\varphi(x)c_{\e,\rho},x\Big)\varphi(x)dx
ds\Bigg|\\
\leq
C\Bigg(\int_{B_R(0)}\big|\eta(z)\big|\bigg(\int_{\R^N}\frac{1}{\e}\Big|u(x+\e\rho
z)-u(x)\Big|dx\bigg)dz\Bigg)\times\\
\times\Bigg|\int_0^1\int_{\Omega}\nabla_b
W\Big(u_{\rho,\e}(x)-s\varphi(x)c_{\e,\rho},x\Big)\varphi(x)dx
ds\Bigg|
\\=
C\rho\Bigg(\int_{B_R(0)}|z|\big|\eta(z)\big|\bigg(\int_{\R^N}\frac{1}{\e\rho|z|}\Big|u(x+\e\rho
z)-u(x)\Big|dx\bigg)dz\Bigg)\times\\
\times\Bigg|\int_0^1\int_{\Omega}\nabla_b
W\Big(u_{\rho,\e}(x)-s\varphi(x)c_{\e,\rho},x\Big)\varphi(x)dx
ds\Bigg|.
\end{multline}
On the other hand, taking into account \er{jiouiuuiui} and using the
Dominated Convergence Theorem and
\er{jmvnvnbccbvhjhjhhjjkhgjgGHKKzzbvqkhhihkRRkkhihhjhjhjhjkhhjhhyfyfy},
we obtain that
\begin{multline}\label{ffffgyufghfghjgghgjkhkkGHGHKKokuhhjujkjhjhhhhugugzzkhhbvqkkklkljjkjkjhjgklkljkll;k;k;ouuiojkkjhkhkljl;klk;k;jhkhkhjjkhkjljjk;kk;jkkhihkjljllmjhnjghghgjhjhjhhjhjRRRRjkhjhojjiji}
\limsup_{\e\to
0+}\Bigg(\int_{B_R(0)}|z|\big|\eta(z)\big|\bigg(\int_{\R^N}\frac{1}{\e\rho|z|}\Big|u(x+\e\rho
z)-u(x)\Big|dx\bigg)dz\Bigg)\times\\
\times\Bigg|\int_0^1\int_{\Omega}\nabla_b
W\Big(u_{\rho,\e}(x)-s\varphi(x)c_{\e,\rho},x\Big)\varphi(x)dx
ds\Bigg|\leq C_0\Big(\|Du\|(\R^n)\Big)\Bigg(\int_{B_R(0)}|z|\big|\eta(z)\big|dz\Bigg)\times\\
\times\Bigg|\int_0^1\int_{\Omega}\nabla_b W\Big(\lim_{\e\to
0+}u_{\rho,\e}(x)-s\varphi(x)\lim_{\e\to
0+}c_{\e,\rho}\,,\,x\Big)\varphi(x)dx ds\Bigg|\\
=
C_0\Big(\|Du\|(\R^n)\Big)\Bigg(\int_{B_R(0)}|z|\big|\eta(z)\big|dz\Bigg)\Bigg|\int_{\Omega}\nabla_b
W\Big(u(x),x\Big)\varphi(x)dx\Bigg|=0.
\end{multline}
Using
\er{ffffgyufghfghjgghgjkhkkGHGHKKokuhhjujkjhjhhhhugugzzkhhbvqkkklkljjkjkjhjgklkljkll;k;k;ouuiojkkjhkhkljl;klk;k;jhkhkhjjkhkjljjk;kk;jkkhihkjljllmjhnjghghgjhjhjhhjhjRRRRjkhjhojjiji}
in
\er{ffffgyufghfghjgghgjkhkkGHGHKKokuhhjujkjhjhhhhugugzzkhhbvqkjijjikkklkljjkjkjhjgklkljkll;k;k;ouuiojkkjhkhkljl;klk;k;jhkhkhjjkhkjljjk;kk;jkkhihkjljllmjhnjghghgjhjhjhhjhjRRRRjkhjh}
yields
\begin{equation}\label{ffffgyufghfghjgghgjkhkkGHGHKKokuhhjujkjhjhhhhugugzzkhhbvqkjijjikkklkljjkjkjhjgklkljkll;k;k;ouuiojkkjhkhkljl;klk;k;jhkhkhjjkhkjljjk;kk;jkkhihkjljllmjhnjghghgjhjhjhhjhjRRRRjkhjhhjjg}
\limsup_{\e\to 0+}\bigg|\frac{1}{\e}\int_{\Omega}\Big(W\big(\tilde
u_{\rho,\e}(x),x\big)-W\big(u_{\rho,\e}(x),x\big)\Big)dx \bigg|=0.
\end{equation}
Plugging
\er{ffffgyufghfghjgghgjkhkkGHGHKKokuhhjujkjhjhhhhugugzzkhhbvqkjijjikkklkljjkjkjhjgklkljkll;k;k;ouuiojkkjhkhkljl;klk;k;jhkhkhjjkhkjljjk;kk;jkkhihkjljllmjhnjghghgjhjhjhhjhjRRRRjkhjhhjjg}
into
\er{fgyufghfghjgghgjkhkkGHGHKKokuhhjujkjhjhhhhugugzzkhhbvqkkklkljjkjkjhjgklkljkll;k;k;ouuiojkkjhkhkljl;klk;k;jhkhkhjjkhkjljjk;kk;jkkhihkjljllmjhnjghghgjhjhjhhjhjRRRRkkijhjh}
we get that
\begin{multline}\label{fgyufghfghjgghgjkhkkGHGHKKokuhhjujkjhjhhhhugugzzkhhbvqkkklkhjggjgljjkjkjhjgklkljkll;k;k;ouuiojkkjhkhkljl;klk;k;jhkhkhjjkhkjljjk;kk;jkkhihkjljllmjhnjghghgjhjhjhhjhjRRRRkkijhjh}
\lim_{\rho\to 0^+}\Bigg\{\limsup_{\e\to
0^+}\bigg(\frac{1}{-\ln{(\e)}}\Big(\|\tilde
u_{\rho,\e}\|^q_{W^{1/q,q}(\R^N,\R^d)}-\|\tilde
u_{\rho,\e}\|^q_{W^{1/q,q}(\R^N\setminus\ov\Omega,\R^d)}\Big)
+\frac{1}{\e}\int_{\Omega}W\Big(\tilde u_{\rho,\e}(x),x\Big)dx
\bigg)\Bigg\}\\=\lim_{\rho\to 0^+}\Bigg\{\limsup_{\e\to
0^+}\bigg(\frac{1}{-\ln{\e}}\|\tilde
u_{\rho,\e}\|^q_{W^{1/q,q}(\Omega,\R^d)}+\frac{1}{\e}\int_{\Omega}W\Big(\tilde
u_{\rho,\e}(x),x\Big)dx \bigg)\Bigg\}
\\=
\Bigg(\int_{\R^{N-1}}\frac{2}{\big(\sqrt{1+|v|^2}\big)^{N+1}}dv\Bigg)\int_{J_u\cap\Omega}
\big|u^+(y)-u^-(y)\big|^q d\mathcal{H}^{N-1}(y).
\end{multline}
Moreover, $\tilde u_{\rho,\e}\to u$ strongly in $L^p(\R^N,\R^d)$ as
$\e\to 0^+$ for every fixed $\rho$ and $p$. Therefore, by the above
and
\er{fgyufghfghjgghgjkhkkGHGHKKokuhhjujkjhjhhhhugugzzkhhbvqkkklkhjggjgljjkjkjhjgklkljkll;k;k;ouuiojkkjhkhkljl;klk;k;jhkhkhjjkhkjljjk;kk;jkkhihkjljllmjhnjghghgjhjhjhhjhjRRRRkkijhjh}
we complete again the proof by a standard diagonal argument.
\end{proof}

The next lemma is needed for the proof of Theorem~\ref{jbnvjnnjvnvnnbhh}
(in the general case $\eta\in W^{1,1}$).

\begin{lemma}\label{hjgjhfhfhfh}
Let $\Omega\subset\R^N$ be an open set and let $u\in BV(\R^N,\R^d)\cap
L^\infty(\R^N,\R^d)$. For $\eta\in
W^{1,1}(\R^N,\R)$,
every $x\in\R^N$ and every $\e>0$ define
\begin{equation}\label{jmvnvnbccbvhjhjhhjjkhgjgGHKKzzbvqkhhihkjjjkktt}
u_\e(x):=\frac{1}{\e^N}\int_{\R^N}\eta\Big(\frac{y-x}{\e}\Big)u(y)dy=\int_{\R^N}\eta(z)u(x+\e
z)dz.
\end{equation}
Then, for every $q>1$ and for every $\e\in(0,1)$ we have
\begin{multline}\label{fgyufghfghjgghgjkhkkGHGHKKokuhhjujkhghghgjhjhhhhugugzzkhhbvqkkklkljjkjkjhjgklkljkll;k;k;ouuiojkkjhkhkljl;klk;k;jhkhkhjjkhkjljjk;kk;jkkhihkjljllmjhnjghghgjhjhjhhjhjRRjjjhhkkhjhttuhgugjojjjggugggujkkhjhghghg}
\frac{1}{\omega_{N-1}\big|\ln{\e}\big|}\int_{\Omega}\bigg(\int_{\Omega}\frac{\big|u_\e(x)-u_\e(y)\big|^q}{|x-y|^{N+1}}dy\bigg)dx
\leq
\frac{2^q\|u\|_{L^1(\R^N,\R^d)}\|u\|^{q-1}_{L^\infty(\R^N,\R^d)}\|\eta\|^q_{L^1(\R^N,\R)}}{\big|\ln{\e}\big|}\\+
\frac{\big(3\|u\|_{L^\infty(\R^N,\R^d)}\|\eta\|_{W^{1,1}(\R^N,\R)}\big)^{q-1}\|\eta\|_{L^{1}(\R^N,\R)}\|Du\|(\R^N)}{(q-1)\big|\ln{\e}\big|}
\\+
\big(3\|u\|_{L^\infty(\R^N,\R^d)}\|\eta\|_{W^{1,1}(\R^N,\R)}\big)^{q-1}\|\eta\|_{L^{1}(\R^N,\R)}\|Du\|(\R^N),
\end{multline}
where $\omega_{N-1}$ denotes the surface area of the unit ball in $\R^N$.
\end{lemma}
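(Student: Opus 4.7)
The strategy is to split the double integral on the left-hand side of the claimed inequality into three pieces according to whether $|x-y|\ge 1$, $\e\le |x-y|<1$, or $|x-y|<\e$, and in each piece to factor $|u_\e(x)-u_\e(y)|^q = |u_\e(x)-u_\e(y)|^{q-1}\cdot|u_\e(x)-u_\e(y)|$. The $(q-1)$-th power will be absorbed by a uniform bound on $u_\e$, while the remaining linear factor will be controlled by either an $L^1$ integrability bound or by the standard $BV$-translation estimate
\begin{equation*}
\int_{\R^N}|u_\e(x+h)-u_\e(x)|\,dx \le \|\eta\|_{L^1}\,|h|\,\|Du\|(\R^N),
\end{equation*}
which follows from writing $u_\e(\cdot+h)-u_\e(\cdot)=\eta_\e*\bigl(u(\cdot+h)-u(\cdot)\bigr)$, from $\|\eta_\e\|_{L^1}=\|\eta\|_{L^1}$, and from Young's convolution inequality together with the BV bound $\int|u(\cdot+h)-u(\cdot)|\le |h|\,\|Du\|(\R^N)$. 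The two pointwise bounds I will use for $u_\e$ are $\|u_\e\|_{L^\infty}\le \|\eta\|_{L^1}\|u\|_{L^\infty}$ and, for the close regime, $\|\nabla u_\e\|_{L^\infty}\le \e^{-1}\|\nabla\eta\|_{L^1}\|u\|_{L^\infty}$, both standard consequences of Young's inequality applied to $u_\e=\eta_\e*u$ and $\nabla u_\e=(\nabla\eta_\e)*u$; here $\eta\in W^{1,1}$ is exactly what guarantees both.

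For the far regime $|x-y|\ge 1$, I would bound the $(q-1)$-factor by $(2\|\eta\|_{L^1}\|u\|_{L^\infty})^{q-1}$, use $|u_\e(x)-u_\e(y)|\le |u_\e(x)|+|u_\e(y)|$, and invoke Fubini together with $\int_{|z|\ge 1}|z|^{-(N+1)}\,dz=\omega_{N-1}$ and $\|u_\e\|_{L^1}\le \|\eta\|_{L^1}\|u\|_{L^1}$; after dividing by $\omega_{N-1}|\ln\e|$ this produces exactly the first summand of the claimed bound. For the intermediate regime $\e\le |x-y|<1$ I would again use the $L^\infty$ bound for the $(q-1)$-factor, change variables $h=y-x$, and apply the translation estimate above to the remaining integral. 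The residual $h$-integral equals $\omega_{N-1}\int_\e^1 dr/r = \omega_{N-1}|\ln\e|$, which cancels the normalization $\omega_{N-1}|\ln\e|$ precisely and produces the $|\ln\e|$-free third summand (up to estimating $\|\eta\|_{L^1}\le \|\eta\|_{W^{1,1}}$ and $2<3$).

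The main obstacle, and the place where the assumption $q>1$ is genuinely used, is the close regime $|x-y|<\e$, where the $L^\infty$ bound on $u_\e$ used above is too crude to control the non-integrable singularity $|x-y|^{-(N+1)}$ of the kernel. Here I would instead bound the $(q-1)$-factor by the Lipschitz-type estimate $|u_\e(x)-u_\e(y)|\le \e^{-1}|x-y|\,\|\nabla\eta\|_{L^1}\|u\|_{L^\infty}$, and combine with the same $BV$-translation estimate for the remaining linear factor. After the change of variables $h=y-x$ the $h$-integral becomes
\begin{equation*}
\e^{-(q-1)}\|\eta\|_{L^1}\|Du\|(\R^N)\int_{|h|<\e}\frac{|h|^{q}}{|h|^{N+1}}\,dh = \frac{\omega_{N-1}}{q-1}\,\|\eta\|_{L^1}\|Du\|(\R^N),
\end{equation*}
whose convergence requires $q>1$ and whose $\e$-independence is why the corresponding summand acquires a $|\ln\e|^{-1}$ after division. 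Summing the three contributions and estimating $\|\nabla\eta\|_{L^1}\le \|\eta\|_{W^{1,1}}$, $\|\eta\|_{L^1}\le \|\eta\|_{W^{1,1}}$, and $2<3$ where convenient yields the stated inequality. The only delicate bookkeeping is matching the explicit constants in each of the three summands, but no further ideas beyond the ones above are required.
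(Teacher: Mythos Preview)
Your proposal is correct and follows essentially the same route as the paper: bound the $\Omega\times\Omega$ integral by the $\R^N\times\R^N$ integral, split according to $|h|<\e$, $\e\le |h|<1$, $|h|\ge 1$, factor $|u_\e(x+h)-u_\e(x)|^q$ as a $(q-1)$-power times a linear factor, control the $(q-1)$-power pointwise via $\|u_\e\|_{L^\infty}$ or the Lipschitz bound $\|\nabla u_\e\|_{L^\infty}\le \e^{-1}\|\nabla\eta\|_{L^1}\|u\|_{L^\infty}$, and control the linear factor by the $BV$ translation estimate; the three radial integrals give $\omega_{N-1}$, $\omega_{N-1}|\ln\e|$, and $\omega_{N-1}\e^{q-1}/(q-1)$ exactly as needed.

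The one noteworthy difference is that the paper first carries out the argument assuming $\eta\in C^\infty_c(\R^N)$, and only afterwards passes to general $\eta\in W^{1,1}(\R^N)$ by approximating $\eta$ in $W^{1,1}$ by a sequence $\eta_n\in C^\infty_c$, using Fatou's lemma on the left and convergence of the norms on the right. You instead invoke the bounds $\|u_\e\|_{L^\infty}\le\|\eta\|_{L^1}\|u\|_{L^\infty}$ and $\|\nabla u_\e\|_{L^\infty}\le\e^{-1}\|\nabla\eta\|_{L^1}\|u\|_{L^\infty}$ directly for $\eta\in W^{1,1}$; this is legitimate (since $\nabla(\eta_\e*u)=(\nabla\eta_\e)*u$ holds weakly for $\eta_\e\in W^{1,1}$, $u\in L^\infty$, giving $u_\e\in W^{1,\infty}$), and it spares you the approximation step. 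A small cosmetic point: in your displayed identity for the close regime you dropped the prefactor $(\|\nabla\eta\|_{L^1}\|u\|_{L^\infty})^{q-1}$ coming from the $(q-1)$-power bound; restoring it and estimating $\|\nabla\eta\|_{L^1}\le\|\eta\|_{W^{1,1}}\le 3\|\eta\|_{W^{1,1}}$ matches the stated second summand.
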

\begin{proof}
Assume first that $\eta(z)\in C^\infty_c(\R^N,\R)
$. Then, by \er{jmvnvnbccbvhjhjhhjjkhgjgGHKKzzbvqkhhihkjjjkktt} we
have
\begin{equation}\label{jmvnvnbccbvhjhjhhjjkhgjgGHKKzzbvqkhhihkjjjkktthhtt}
\e\nabla
u_\e(x)=-\frac{1}{\e^N}\int_{\R^N}\nabla\eta\Big(\frac{y-x}{\e}\Big)u(y)dy=-\int_{\R^N}\nabla\eta(z)u(x+\e
z)dz\,.
\end{equation}
By
\er{jmvnvnbccbvhjhjhhjjkhgjgGHKKzzbvqkhhihkjjjkktt} and
\er{jmvnvnbccbvhjhjhhjjkhgjgGHKKzzbvqkhhihkjjjkktthhtt} we get that
\begin{multline}\label{jmvnvnbccbvhjhjhhjjkhgjgGHKKzzbvqkhhihkjjjkkttjhhj}
\|u_\e\|_{L^\infty(\R^N,\R^d)}+\|\e\nabla
u_\e\|_{L^\infty(\R^N,\R^d)}\leq
\|u\|_{L^\infty(\R^N,\R^d)}\|\eta\|_{W^{1,1}(\R^N,\R)}\quad\text{and}\\
\quad\|u_\e\|^q_{L^q(\R^N,\R^d)}\leq\|u\|_{L^1(\R^N,\R^d)}\|u\|^{q-1}_{L^\infty(\R^N,\R^d)}\|\eta\|^q_{L^1(\R^N,\R)}\quad\quad\forall\e>0,\;\forall
q\in[1,+\infty).
\end{multline}
Next, for every $\e\in(0,1)$ we have
\begin{multline}\label{fgyufghfghjgghgjkhkkGHGHKKokuhhjujkhghghgjhjhhhhugugzzkhhbvqkkklkljjkjkjhjgklkljkll;k;k;ouuiojkkjhkhkljl;klk;k;jhkhkhjjkhkjljjk;kk;jkkhihkjljllmjhnjghghgjhjhjhhjhjRRjjjhhkkhjhttjkkjkhh}
\int_{\Omega}\bigg(\int_{\Omega}\frac{\big|u_\e(x)-u_\e(y)\big|^q}{|x-y|^{N+1}}dy\bigg)dx\leq
\int_{\R^N}\bigg(\int_{\R^N}\frac{\big|u_\e(x)-u_\e(y)\big|^q}{|x-y|^{N+1}}dy\bigg)dx=\\
\int_{\R^N}\bigg(\int_{\R^N}\frac{\big|u_\e(x+y)-u_\e(x)\big|^q}{|y|^{N+1}}dy\bigg)dx=
\int_{\R^N}\bigg(\int_{
B_{\e}(0)}\frac{\big|u_\e(x+y)-u_\e(x)\big|^q}{|y|^{N+1}}dy\bigg)dx\\+\int_{\R^N}\bigg(\int_{B_{1}(0)\setminus
B_{\e}(0)}\frac{\big|u_\e(x+y)-u_\e(x)\big|^q}{|y|^{N+1}}dy\bigg)dx+\int_{\R^N}\bigg(\int_{\R^N\setminus
B_{1}(0)}\frac{\big|u_\e(x+y)-u_\e(x)\big|^q}{|y|^{N+1}}dy\bigg)dx
\\= \int_{B_{\e}(0)}\frac{1}{|y|^{N+1-q}}\bigg(\int_{\R^N
}\frac{\big|u_\e(x+y)-u_\e(x)\big|^q}{|y|^{q}}dx\bigg)dy\\+\int_{B_{1}(0)\setminus
B_{\e}(0)}\frac{1}{|y|^{N}}\bigg(\int_{\R^N}\frac{\big|u_\e(x+y)-u_\e(x)\big|^q}{|y|}dx\bigg)dy\\+\int_{\R^N\setminus
B_{1}(0)}\frac{1}{|y|^{N+1}}\bigg(\int_{\R^N}\big|u_\e(x+y)-u_\e(x)\big|^qdx\bigg)dy.
\end{multline}
On the other hand,
\er{jmvnvnbccbvhjhjhhjjkhgjgGHKKzzbvqkhhihkjjjkkttjhhj} yields
\begin{equation}\label{jtghhfffffffffffffhgjjhttgghkljjjhkh}
\big|u_\e(x+y)-u_\e(x)\big|+
\frac{\e\big|u_\e(x+y)-u_\e(x)\big|}{|x-y|}\leq
3\|u\|_{L^\infty(\R^N,\R^d)}\|\eta\|_{W^{1,1}(\R^N,\R)}\quad\quad\forall\e>0,\;\forall\,x,y\in\R^N.
\end{equation}
Thus, inserting \er{jtghhfffffffffffffhgjjhttgghkljjjhkh} into
\er{fgyufghfghjgghgjkhkkGHGHKKokuhhjujkhghghgjhjhhhhugugzzkhhbvqkkklkljjkjkjhjgklkljkll;k;k;ouuiojkkjhkhkljl;klk;k;jhkhkhjjkhkjljjk;kk;jkkhihkjljllmjhnjghghgjhjhjhhjhjRRjjjhhkkhjhttjkkjkhh}
we deduce that
\begin{multline}\label{fgyufghfghjgghgjkhkkGHGHKKokuhhjujkhghghgjhjhhhhugugzzkhhbvqkkklkljjkjkjhjgklkljkll;k;k;ouuiojkkjhkhkljl;klk;k;jhkhkhjjkhkjljjk;kk;jkkhihkjljllmjhnjghghgjhjhjhhjhjRRjjjhhkkhjhttuhgugjojj}
\int_{\Omega}\bigg(\int_{\Omega}\frac{\big|u_\e(x)-u_\e(y)\big|^q}{|x-y|^{N+1}}dy\bigg)dx
\leq 2^q\|u_\e\|^q_{L^q(\R^N,\R^d)}\int_{\R^N\setminus
B_{1}(0)}\frac{dy}{|y|^{N+1}}\\+
\frac{\big(3\|u\|_{L^\infty(\R^N,\R^d)}\|\eta\|_{W^{1,1}(\R^N,\R)}\big)^{q-1}}{\e^{q-1}}\int_{B_{\e}(0)}\frac{1}{|y|^{N+1-q}}\bigg(\int_{\R^N}\frac{\big|u_\e(x+y)-u_\e(x)\big|}{|y|}
dx\bigg)dy
\\+
\big(3\|u\|_{L^\infty(\R^N,\R^d)}\|\eta\|_{W^{1,1}(\R^N,\R)}\big)^{q-1}
\int_{B_{1}(0)\setminus
B_{\e}(0)}\frac{1}{|y|^{N}}\bigg(\int_{\R^N}\frac{\big|u_\e(x+y)-u_\e(x)\big|}{|y|}dx\bigg)dy.
\end{multline}
Inserting
\er{jmvnvnbccbvhjhjhhjjkhgjgGHKKzzbvqkhhihkjjjkktt} into
\er{fgyufghfghjgghgjkhkkGHGHKKokuhhjujkhghghgjhjhhhhugugzzkhhbvqkkklkljjkjkjhjgklkljkll;k;k;ouuiojkkjhkhkljl;klk;k;jhkhkhjjkhkjljjk;kk;jkkhihkjljllmjhnjghghgjhjhjhhjhjRRjjjhhkkhjhttuhgugjojj}
and using the second inequality in
\er{jmvnvnbccbvhjhjhhjjkhgjgGHKKzzbvqkhhihkjjjkkttjhhj} we infer,
\begin{multline}\label{fgyufghfghjgghgjkhkkGHGHKKokuhhjujkhghghgjhjhhhhugugzzkhhbvqkkklkljjkjkjhjgklkljkll;k;k;ouuiojkkjhkhkljl;klk;k;jhkhkhjjkhkjljjk;kk;jkkhihkjljllmjhnjghghgjhjhjhhjhjRRjjjhhkkhjhttuhgugjojjjgg}
\int_{\Omega}\bigg(\int_{\Omega}\frac{\big|u_\e(x)-u_\e(y)\big|^q}{|x-y|^{N+1}}dy\bigg)dx
\leq
2^q\|u\|_{L^1(\R^N,\R^d)}\|u\|^{q-1}_{L^\infty(\R^N,\R^d)}\|\eta\|^q_{L^1(\R^N,\R)}\int_{\R^N\setminus
B_{1}(0)}\frac{dy}{|y|^{N+1}}\\+
\frac{\big(3\|u\|_{L^\infty(\R^N,\R^d)}\|\eta\|_{W^{1,1}(\R^N,\R)}\big)^{q-1}}{\e^{q-1}}\times
\\ \times\int_{B_{\e}(0)}\frac{1}{|y|^{N+1-q}}\bigg(\int_{\R^N}\big|\eta(z)\big|\int_{\R^N}\frac{\big|u_\e(x+\e
z+y)-u_\e(x+\e z)\big|}{|y|}dxdz\bigg)dy
\\+
\big(3\|u\|_{L^\infty(\R^N,\R^d)}\|\eta\|_{W^{1,1}(\R^N,\R)}\big)^{q-1}\times\\
\times \int_{B_{1}(0)\setminus
B_{\e}(0)}\frac{1}{|y|^{N}}\bigg(\int_{\R^N}\big|\eta(z)\big|\int_{\R^N}\frac{\big|u_\e(x+\e
z+y)-u_\e(x+\e z)\big|}{|y|}dxdz\bigg)dy.
\end{multline}
Taking into account the following well known uniform bound
from the theory of $BV$ functions:
\begin{equation}\label{jiouiuuiuittijhihijhjjhkkkl}
\int_{\R^N}\frac{\big|u(x+\e z+y)-u(x+\e
z)\big|}{|y|}dx=\int_{\R^N}\frac{\big|u(x+y)-u(x)\big|}{|y|}dx\leq
\|Du\|(\R^N)\quad\forall y\in\R^N,
\end{equation}
we rewrite
\er{fgyufghfghjgghgjkhkkGHGHKKokuhhjujkhghghgjhjhhhhugugzzkhhbvqkkklkljjkjkjhjgklkljkll;k;k;ouuiojkkjhkhkljl;klk;k;jhkhkhjjkhkjljjk;kk;jkkhihkjljllmjhnjghghgjhjhjhhjhjRRjjjhhkkhjhttuhgugjojjjgg}
as
\begin{multline}\label{fgyufghfghjgghgjkhkkGHGHKKokuhhjujkhghghgjhjhhhhugugzzkhhbvqkkklkljjkjkjhjgklkljkll;k;k;ouuiojkkjhkhkljl;klk;k;jhkhkhjjkhkjljjk;kk;jkkhihkjljllmjhnjghghgjhjhjhhjhjRRjjjhhkkhjhttuhgugjojjjggugggu}
\int_{\Omega}\bigg(\int_{\Omega}\frac{\big|u_\e(x)-u_\e(y)\big|^q}{|x-y|^{N+1}}dy\bigg)dx
\leq
2^q\|u\|_{L^1(\R^N,\R^d)}\|u\|^{q-1}_{L^\infty(\R^N,\R^d)}\|\eta\|^q_{L^1(\R^N,\R)}\int_{\R^N\setminus
B_{1}(0)}\frac{dy}{|y|^{N+1}}\\+
\frac{\big(3\|u\|_{L^\infty(\R^N,\R^d)}\|\eta\|_{W^{1,1}(\R^N,\R)}\big)^{q-1}}{\e^{q-1}}\|\eta\|_{L^{1}(\R^N,\R)}\|Du\|(\R^N)\int_{B_{\e}(0)}\frac{dy}{|y|^{N+1-q}}
\\+
\big(3\|u\|_{L^\infty(\R^N,\R^d)}\|\eta\|_{W^{1,1}(\R^N,\R)}\big)^{q-1}\|\eta\|_{L^{1}(\R^N,\R)}\|Du\|(\R^N)\int_{B_{1}(0)\setminus
B_{\e}(0)}\frac{dy}{|y|^{N}}.
\end{multline}
Computing the integrals on  the R.H.S.~of
\er{fgyufghfghjgghgjkhkkGHGHKKokuhhjujkhghghgjhjhhhhugugzzkhhbvqkkklkljjkjkjhjgklkljkll;k;k;ouuiojkkjhkhkljl;klk;k;jhkhkhjjkhkjljjk;kk;jkkhihkjljllmjhnjghghgjhjhjhhjhjRRjjjhhkkhjhttuhgugjojjjggugggu}
yields
\er{fgyufghfghjgghgjkhkkGHGHKKokuhhjujkhghghgjhjhhhhugugzzkhhbvqkkklkljjkjkjhjgklkljkll;k;k;ouuiojkkjhkhkljl;klk;k;jhkhkhjjkhkjljjk;kk;jkkhihkjljllmjhnjghghgjhjhjhhjhjRRjjjhhkkhjhttuhgugjojjjggugggujkkhjhghghg}
in the case $\eta\in C^\infty_c(\R^N,\R)$.

Next consider the general case $\eta\in
W^{1,1}(\R^N,\R)$. Thanks to the density of $C^\infty_c(\R^N,\R)$ in $W^{1,1}(\R^N,\R)$, there exists a sequence
$\big\{\eta_n\big\}_{n=1}^{\infty}\subset C^\infty_c(\R^N,\R)$
such that
\begin{equation}\label{jiouiuuiuittijhihijhjjhkkklljkjljjkjk}
\lim_{n\to+\infty}\big\|\eta_n-\eta\big\|_{W^{1,1}(\R^N,\R)}=0.
\end{equation}
Thus if we define
\begin{equation}\label{jmvnvnbccbvhjhjhhjjkhgjgGHKKzzbvqkhhihkjjjkkttjjjjnbb}
u_{n,\e}(x):=\frac{1}{\e^N}\int_{\R^N}\eta_n\Big(\frac{y-x}{\e}\Big)u(y)dy=\int_{\R^N}\eta_n(z)u(x+\e
z)dz,
\end{equation}
then
\begin{equation}\label{jmvnvnbccbvhjhjhhjjkhgjgGHKKzzbvqkhhihkjjjkkttjjjjnkljlljnbmbhgg}
\lim_{n\to+\infty}u_{n,\e}(x)=u_{\e}(x)\quad\forall
x\in\R^N,\;\forall \e>0.
\end{equation}
On the other hand, since we proved
\er{fgyufghfghjgghgjkhkkGHGHKKokuhhjujkhghghgjhjhhhhugugzzkhhbvqkkklkljjkjkjhjgklkljkll;k;k;ouuiojkkjhkhkljl;klk;k;jhkhkhjjkhkjljjk;kk;jkkhihkjljllmjhnjghghgjhjhjhhjhjRRjjjhhkkhjhttuhgugjojjjggugggujkkhjhghghg}
for the case $\eta_n \in C^\infty_c(\R^N,\R)$,  for every $q>1$,
for every $n=1,2,\ldots$ and for every $\e\in(0,1)$ we have:
\begin{multline}\label{fgyufghfghjgghgjkhkkGHGHKKokuhhjujkhghghgjhjhhhhugugzzkhhbvqkkklkljjkjkjhjgklkljkll;k;k;ouuiojkkjhkhkljl;klk;k;jhkhkhjjkhkjljjk;kk;jkkhihkjljllmjhnjghghgjhjhjhhjhjRRjjjhhkkhjhttuhgugjojjjggugggujkkhjhghghgjkjkkjkjk}
\frac{1}{\omega_{N-1}\big|\ln{\e}\big|}\int_{\Omega}\bigg(\int_{\Omega}\frac{\big|u_{n,\e}(x)-u_{n,\e}(y)\big|^q}{|x-y|^{N+1}}dy\bigg)dx
\leq
\frac{2^q\|u\|_{L^1(\R^N,\R^d)}\|u\|^{q-1}_{L^\infty(\R^N,\R^d)}\|\eta_n\|^q_{L^1(\R^N,\R)}}{\big|\ln{\e}\big|}\\+
\frac{\big(3\|u\|_{L^\infty(\R^N,\R^d)}\|\eta_n\|_{W^{1,1}(\R^N,\R)}\big)^{q-1}\|\eta_n\|_{L^{1}(\R^N,\R)}\|Du\|(\R^N)}{(q-1)\big|\ln{\e}\big|}
\\+
\big(3\|u\|_{L^\infty(\R^N,\R^d)}\|\eta_n\|_{W^{1,1}(\R^N,\R)}\big)^{q-1}\|\eta_n\|_{L^{1}(\R^N,\R)}\|Du\|(\R^N).
\end{multline}
Letting  $n$ go to infinity in
\er{fgyufghfghjgghgjkhkkGHGHKKokuhhjujkhghghgjhjhhhhugugzzkhhbvqkkklkljjkjkjhjgklkljkll;k;k;ouuiojkkjhkhkljl;klk;k;jhkhkhjjkhkjljjk;kk;jkkhihkjljllmjhnjghghgjhjhjhhjhjRRjjjhhkkhjhttuhgugjojjjggugggujkkhjhghghgjkjkkjkjk}, using \er{jiouiuuiuittijhihijhjjhkkklljkjljjkjk} in the R.H.S.
and
\er{jmvnvnbccbvhjhjhhjjkhgjgGHKKzzbvqkhhihkjjjkkttjjjjnkljlljnbmbhgg}
together with Fatou's Lemma in the L.H.S., we obtain
\er{fgyufghfghjgghgjkhkkGHGHKKokuhhjujkhghghgjhjhhhhugugzzkhhbvqkkklkljjkjkjhjgklkljkll;k;k;ouuiojkkjhkhkljl;klk;k;jhkhkhjjkhkjljjk;kk;jkkhihkjljllmjhnjghghgjhjhjhhjhjRRjjjhhkkhjhttuhgugjojjjggugggujkkhjhghghg}
in the general case $\eta\in W^{1,1}(\R^N,\R)$.
\end{proof}
\begin{proof}[Proof of Theorem \ref{jbnvjnnjvnvnnbhh}]
In the case $\eta\in C^\infty_c(\R^N,\R)$ the result follows by
Proposition \ref{jbnvjnnjvnvnnb}. Next consider the general case
$\eta\in
W^{1,1}(\R^N,\R)$. As before, by the density of  $C^\infty_c(\R^N,\R)$
in $W^{1,1}(\R^N,\R)$, there exists a sequence
$\big\{\eta_n\big\}_{n=1}^{\infty}\subset C^\infty_c(\R^N,\R)$
such that
\begin{equation}\label{jiouiuuiuittijhihijhjjhkkklljkjljjkjkll}
\lim_{n\to+\infty}\big\|\eta_n-\eta\big\|_{W^{1,1}(\R^N,\R)}=0.
\end{equation}
Next, as before, define
\begin{equation}\label{jmvnvnbccbvhjhjhhjjkhgjgGHKKzzbvqkhhihkjjjkkttjjjjnbbll}
u_{n,\e}(x):=\frac{1}{\e^N}\int_{\R^N}\eta_n\Big(\frac{y-x}{\e}\Big)u(y)dy=\int_{\R^N}\eta_n(z)u(x+\e
z)dz.
\end{equation}
Defining $u_{n,\e}$ as in
\er{jmvnvnbccbvhjhjhhjjkhgjgGHKKzzbvqkhhihkjjjkkttjjjjnbb} we get by Proposition \ref{jbnvjnnjvnvnnb}, for all $n\ge1$ (see \er{fgyufghfghjgghgjkhkkGHGHKKokuhhhugugzzkhhbvqkkklkljjkjkklkljkll;k;k;ouuiojkkjhkhkljl;klk;k;jhkhkhkhkjljjk;kk;jkkhihkjljllmnjghghgjkhjh}),
\begin{equation}\label{fgyufghfghjgghgjkhkkGHGHKKokuhhjujkjhjhhhhugugzzkhhbvqkkklkljjkjkjhjgklkljkll;k;k;ouuiojkkjhkhkljl;klk;k;jhkhkhjjkhkjljjk;kk;jkkhihkjljllmjhnjghghgjhjhjhhjhjRRhhjkjkjhjhgj}
\lim_{\e\to
0^+}\frac{1}{|\ln{\e}|}\|u_{n,\e}\|^q_{W^{1/q,q}(\Omega,\R^d)}=
2D_N\bigg|\int_{\R^N}\eta_n(z)dz\bigg|^q\int_{J_u\cap\Omega}
\Big|u^+(x)-u^-(x)\Big|^q d\mathcal{H}^{N-1}(x):=L_n,
\end{equation}
and then
\begin{equation}\label{jiouiuuiuittijhihijhjjhkkklljkjljjkjkllkjjhjh}
\lim_{n\to\infty}L_n=\bar
L:=2D_N\bigg|\int_{\R^N}\eta(z)dz\bigg|^q\int_{J_u\cap\Omega}
\Big|u^+(x)-u^-(x)\Big|^q d\mathcal{H}^{N-1}(x).
\end{equation}
On the other hand, by Lemma \ref{hjgjhfhfhfh}, for all $n\ge1$
and every $\e\in (0,1/e)$ we have
\begin{multline}\label{fgyufghfghjgghgjkhkkGHGHKKokuhhjujkhghghgjhjhhhhugugzzkhhbvqkkklkljjkjkjhjgklkljkll;k;k;ouuiojkkjhkhkljl;klk;k;jhkhkhjjkhkjljjk;kk;jkkhihkjljllmjhnjghghgjhjhjhhjhjRRjjjhhkkhjhttuhgugjojjjggugggujkkhjhghghgjkjkkjkjkjkjhjh}
\frac{1}{\omega_{N-1}\big|\ln{\e}\big|}\int_{\Omega}\Bigg(\int_{\Omega}\frac{1}{|x-y|^{N+1}}\bigg|\Big(u_{n,\e}(x)-u_{n,\e}(y)
\Big)-\Big(u_{\e}(x)-u_{\e}(y)\Big)\bigg|^qdy\Bigg)dx=\\
\frac{1}{\omega_{N-1}\big|\ln{\e}\big|}\int_{\Omega}\Bigg(\int_{\Omega}\frac{1}{|x-y|^{N+1}}\bigg|\Big(u_{n,\e}(x)-
u_{\e}(x)\Big)-\Big(u_{n,\e}(y)-u_{\e}(y)\Big)\bigg|^qdy\Bigg)dx\\
\leq
2^q\|u\|_{L^1(\R^N,\R^d)}\|u\|^{q-1}_{L^\infty(\R^N,\R^d)}\|\eta_n-\eta\|^q_{L^1(\R^N,\R)}\\+
\frac{\big(3\|u\|_{L^\infty(\R^N,\R^d)}\|\eta_n-\eta\|_{W^{1,1}(\R^N,\R)}\big)^{q-1}\|\eta_n-\eta\|_{L^{1}(\R^N,\R)}\|Du\|(\R^N)}{(q-1)}
\\+
\big(3\|u\|_{L^\infty(\R^N,\R^d)}\|\eta_n-\eta\|_{W^{1,1}(\R^N,\R)}\big)^{q-1}\|\eta_n-\eta\|_{L^{1}(\R^N,\R)}\|Du\|(\R^N):=H_n.
\end{multline}
Thus, by the triangle inequality we get, for every  $n\ge1$ and every $\e\in
(0,1/e)$,
\begin{equation}\label{fgyufghfghjgghgjkhkkGHGHKKokuhhjujkhghghgjhjhhhhugugzzkhhbvqkkklkljjkjkjhjgklkljkll;k;k;ouuiojkkjhkhkljl;klk;k;jhkhkhjjkhkjljjk;kk;jkkhihkjljllmjhnjghghgjhjhjhhjhjRRjjjhhkkhjhttuhgugjojjjggugggujkkhjhghghgjkjkkjkjkjkjhjhnk}
\frac{1}{|\ln\e|^{1/q}}      \Big|\|u_{n,\e}\|_{W^{1/q,q}}-\|u_{\e}\|_{W^{1/q,q}}\Big|
\leq \frac{\|u_{n,\e}-u_\e\|_{W^{1/q,q}}}{|\ln\e|^{1/q}}  \leq
\big(\omega_{N-1}H_n\big)^{1/q}.
\end{equation}
Then, by
\er{fgyufghfghjgghgjkhkkGHGHKKokuhhjujkhghghgjhjhhhhugugzzkhhbvqkkklkljjkjkjhjgklkljkll;k;k;ouuiojkkjhkhkljl;klk;k;jhkhkhjjkhkjljjk;kk;jkkhihkjljllmjhnjghghgjhjhjhhjhjRRjjjhhkkhjhttuhgugjojjjggugggujkkhjhghghgjkjkkjkjkjkjhjhnk}
and
\er{fgyufghfghjgghgjkhkkGHGHKKokuhhjujkjhjhhhhugugzzkhhbvqkkklkljjkjkjhjgklkljkll;k;k;ouuiojkkjhkhkljl;klk;k;jhkhkhjjkhkjljjk;kk;jkkhihkjljllmjhnjghghgjhjhjhhjhjRRhhjkjkjhjhgj},
for all $n\ge1$ we obtain:
\begin{multline}\label{fgyufghfghjgghgjkhkkGHGHKKokuhhjujkhghghgjhjhhhhugugzzkhhbvqkkklkljjkjkjhjgklkljkll;k;k;ouuiojkkjhkhkljl;klk;k;jhkhkhjjkhkjljjk;kk;jkkhihkjljllmjhnjghghgjhjhjhhjhjRRjjjhhkkhjhttuhgugjojjjggugggujkkhjhghghgjkjkkjkjkjkjhjhmnjhjhjh}
\limsup_{\e\to0^+}\Big|\frac{\|u_{\e}\|_{W^{1/q,q}}}{|\ln\e|^{1/q}}
-\bar L^{1/q}\Big|\leq \limsup_{\e\to0^+}
\frac{1}{\big|\ln{\e}\big|^{1/q}}\Big|\|u_{n,\e}\|_{W^{1/q,q}}-\|u_{\e}\|_{W^{1/q,q}}\Big|
\\+\limsup_{\e\to0^+}\Big|\frac{\|u_{n,\e}\|_{W^{1/q,q}}}{\big|\ln{\e}\big|^{1/q}}-L_n^{1/q}\Big|+|L_n^{1/q}-{\bar L}^{1/q}|
\leq
\big(\omega_{N-1}H_n\big)^{1/q}+0+|
L_n^{1/q}-\bar L^{1/q}|.
\end{multline}
Letting $n$ go to infinity in
\er{fgyufghfghjgghgjkhkkGHGHKKokuhhjujkhghghgjhjhhhhugugzzkhhbvqkkklkljjkjkjhjgklkljkll;k;k;ouuiojkkjhkhkljl;klk;k;jhkhkhjjkhkjljjk;kk;jkkhihkjljllmjhnjghghgjhjhjhhjhjRRjjjhhkkhjhttuhgugjojjjggugggujkkhjhghghgjkjkkjkjkjkjhjhmnjhjhjh},
using \er{jiouiuuiuittijhihijhjjhkkklljkjljjkjkllkjjhjh}, the
definition of $\bar L$ in  \er{jiouiuuiuittijhihijhjjhkkklljkjljjkjkllkjjhjh} and the
fact that $\lim_{n\to+\infty}H_n=0$, we finally deduce \er{fgyufghfghjgghgjkhkkGHGHKKokuhhjujkjhjhhhhugugzzkhhbvqkkklkljjkjkjhjgklkljkll;k;k;ouuiojkkjhkhkljl;klk;k;jhkhkhjjkhkjljjk;kk;jkkhihkjljllmjhnjghghgjhjhjhhjhjRRhh}.
\end{proof}

\section{Appendix:
Some known results on BV-spaces}\label{AppA} In what follows we
present some known definitions and results on BV-spaces; some of
them were used in the previous sections. We rely mainly on the book
\cite{amb} by Ambrosio, Fusco and Pallara.
\begin{definition}
Let $\Omega$ be a domain in $\R^N$ and let $f\in L^1(\Omega,\R^m)$.
We say that $f\in BV(\Omega,\R^m)$ if the following quantity is
finite:
\begin{equation*}
\int_\Omega|Df|:= \sup\bigg\{\int_\Omega f\cdot\Div\f \,dx :\,\f\in
C^1_c(\Omega,\R^{m\times N}),\;|\f(x)|\leq 1\;\forall x \bigg\}.
\end{equation*}
\end{definition}
\begin{definition}\label{defjac889878}
Let $\Omega$ be a domain in $\R^N$. Consider a function
$f\in L^1_{loc}(\Omega,\R^m)$ and a point $x\in\Omega$.\\
i) We say that $x$ is an {\em approximate continuity point} of $f$
if there exists $z\in\R^m$ such that
\begin{equation*}
\lim\limits_{\rho\to 0^+}\frac{\int_{B_\rho(x)}|f(y)-z|\,dy}
{\rho^N}=0.
\end{equation*}
In this case we denote $z$ by $\tilde{f}(x)$. The set of approximate
continuity points of
$f$ is denoted by $G_f$.\\
ii) We say that $x$ is an {\em approximate jump point} of $f$ if
there exist $a,b\in\R^m$ and $\vec\nu\in S^{N-1}$ such that $a\neq
b$ and
\begin{equation}\label{gjgggghfhf}
\lim\limits_{\rho\to
0^+}\frac{\int_{B_\rho(x)}\big|\,f(y)-\chi(a,b,\vec\nu)(y)\,\big|\,dy}{\rho^N}=0,
\end{equation}
where $\chi(a,b,\vec\nu)$ is defined by
\begin{equation*}
\chi(a,b,\vec\nu)(y):=
\begin{cases}
b\quad\text{if }\vec\nu\cdot y<0,\\
a\quad\text{if }\vec\nu\cdot y>0.
\end{cases}
\end{equation*}
The triple $(a,b,\vec\nu)$, uniquely determined, up to a permutation
of $(a,b)$ and a change of sign of $\vec\nu$, is denoted by
$(f^+(x),f^-(x),\vec\nu_f(x))$. We shall call $\vec\nu_f(x)$ the
{\em approximate jump vector} and we shall sometimes write simply
$\vec\nu(x)$ if the reference to the function $f$ is clear. The set
of approximate jump points is denoted by $J_f$. A choice of
$\vec\nu(x)$ for every $x\in J_f$ determines an orientation of
$J_f$. At an approximate continuity point $x$, we shall use the
convention $f^+(x)=f^-(x)=\tilde f(x)$.
\end{definition}
\begin{theorem}[Theorems 3.69 and 3.78 from \cite{amb}]\label{petTh}
Consider an open set $\Omega\subset\R^N$ and $f\in BV(\Omega,\R^m)$.
Then:\\
\noindent i) $\mathcal{H}^{N-1}$-a.e. point in
$\Omega\setminus J_f$ is a point of approximate continuity of $f$.\\
\noindent ii) The set $J_f$  is
$\sigma$-$\mathcal{H}^{N-1}$-rectifiable Borel set, oriented by
$\vec\nu(x)$. I.e., the set $J_f$ is $\mathcal{H}^{N-1}$
$\sigma$-finite, there exist countably many $C^1$ hypersurfaces
$\{S_k\}^{\infty}_{k=1}$ such that
$\mathcal{H}^{N-1}\Big(J_f\setminus\bigcup\limits_{k=1}^{\infty}S_k\Big)=0$,
and for $\mathcal{H}^{N-1}$-a.e. $x\in J_f\cap S_k$, the approximate
jump vector $\vec\nu(x)$ is  normal to $S_k$ at the point $x$.
\\ \noindent iii)
$\big[(f^+-f^-)\otimes\vec\nu_f\big](x)\in
L^1(J_f,d\mathcal{H}^{N-1})$.
\end{theorem}
\begin{theorem}[Theorems 3.92 and 3.78 from \cite{amb}]\label{vtTh3}
Consider an open set $\Omega\subset\R^N$ and $f\in BV(\Omega,\R^m)$.
Then, the distributional gradient
 $D f$ can be decomposed
as a sum of two Borel regular finite matrix-valued measures $\mu_f$
and $D^j f$ on $\Omega$,
\begin{equation*}
D f=\mu_f+D^j f,
\end{equation*}
where
\begin{equation*}
D^j f=(f^+-f^-)\otimes\vec\nu_f \mathcal{H}^{N-1}\llcorner J_f
\end{equation*}
is called the jump part of $D f$ and
\begin{equation*}
\mu_f=(D^a f+D^c f)
\end{equation*}
is a sum of the absolutely continuous and the Cantor parts of $D f$.
The two parts $\mu_f$ and $D^j f$ are mutually singular to each
other. Moreover, $\mu_f (B)=0$ for any Borel set $B\subset\Omega$
which is $\mathcal{H}^{N-1}$ $\sigma$-finite.
\end{theorem}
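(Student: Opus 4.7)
The proof splits into three stages. First, apply the Lebesgue--Radon--Nikodym theorem to the finite $\R^{m\times N}$-valued Radon measure $Df$: this produces $Df = D^a f + D^s f$ with $D^a f \ll \mathcal{L}^N$ and $D^s f \perp \mathcal{L}^N$, both summands automatically Borel regular and finite. Next, since $J_f$ is a Borel, $\sigma$-$\mathcal{H}^{N-1}$-rectifiable set by Theorem~\ref{petTh}, define
\begin{equation*}
D^j f := D^s f \llcorner J_f,\qquad D^c f := D^s f \llcorner (\Omega \setminus J_f),\qquad \mu_f := D^a f + D^c f.
\end{equation*}
Then $Df = \mu_f + D^j f$ holds tautologically, and mutual singularity of $\mu_f$ and $D^j f$ is also automatic, since they are supported on the disjoint Borel sets $\Omega \setminus J_f$ and $J_f$ respectively.

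The analytic core is the representation $D^j f = (f^+-f^-)\otimes \vec\nu_f\,\mathcal{H}^{N-1} \llcorner J_f$. The plan is to proceed by blow-up at $\mathcal{H}^{N-1}$-a.e.\ $x \in J_f$. The defining property \er{gjgggghfhf} of an approximate jump point yields that the rescaled functions $y \mapsto f(x + \rho y)$ converge in $L^1_{loc}(\R^N, \R^m)$ to $\chi(f^+(x), f^-(x), \vec\nu_f(x))$ as $\rho \to 0^+$. Continuity of the distributional gradient then gives weak-$*$ convergence of the rescaled matrix-valued measures
\begin{equation*}
\frac{1}{\rho^{N-1}}\,Df\big(x + \rho\,\cdot\big) \weakly \big(f^+(x)-f^-(x)\big)\otimes \vec\nu_f(x)\,\mathcal{H}^{N-1} \llcorner H_0\big(\vec\nu_f(x)\big),
\end{equation*}
with $H_0$ the hyperplane through the origin as in \er{HN}. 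Combined with the Besicovitch--Federer differentiation theorem for Radon measures on the rectifiable set $J_f$, this identifies the Radon--Nikodym density of $D^s f$ with respect to $\mathcal{H}^{N-1} \llcorner J_f$, yielding the asserted formula for $D^j f$ and, by finiteness of $|Df|$, the integrability $\big[(f^+-f^-)\otimes \vec\nu_f\big] \in L^1(J_f, d\mathcal{H}^{N-1})$ stated in Theorem~\ref{petTh}(iii).

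Finally, it remains to verify $\mu_f(B) = 0$ for every $\mathcal{H}^{N-1}$ $\sigma$-finite Borel $B \subset \Omega$. For $D^a f$ this is immediate since $\mathcal{L}^N(B) = 0$ for such $B$. For $D^c f$ the argument hinges on showing that $D^c f$ cannot concentrate on a set of finite $(N-1)$-dimensional Hausdorff measure: reducing to a piece $B' \subset B \setminus J_f$ with $\mathcal{H}^{N-1}(B') < +\infty$ and invoking a Besicovitch covering/density estimate, any $|D^c f|$-positive portion of $B'$ would generate a point of positive $(N-1)$-dimensional density of $|D^s f|$ outside $J_f$, which by a blow-up argument analogous to the one above would force the existence of a genuine approximate jump vector at that point—contradicting the fact that $D^c f$ is supported on $\Omega \setminus J_f$.

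The principal obstacle is the blow-up identification on $J_f$: upgrading the qualitative $L^1_{loc}$ convergence provided by \er{gjgggghfhf} into the precise weak-$*$ limit of rescaled matrix-valued measures supported on the tangent hyperplane, and then converting this weak limit into a pointwise Radon--Nikodym derivative $\mathcal{H}^{N-1}$-a.e.\ on $J_f$. This step is where the rectifiability of $J_f$ (Theorem~\ref{petTh}(ii)) enters essentially, through the Besicovitch--Federer differentiation theorem, and is the only part of the proof that departs from soft measure-theoretic bookkeeping.
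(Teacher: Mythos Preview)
The paper does not prove this theorem; it is stated in the Appendix purely as a quoted result from \cite{amb} (Theorems~3.78 and~3.92 there), with no proof given. So there is nothing in the paper to compare your proposal against.

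That said, your sketch is essentially the standard route taken in \cite{amb}: Lebesgue decomposition of $Df$, restriction of the singular part to $J_f$ and its complement, identification of the density of $D^j f$ via blow-up at approximate jump points combined with rectifiability of $J_f$, and finally the Federer--Vol'pert-type argument that $D^c f$ cannot see $\mathcal{H}^{N-1}$ $\sigma$-finite sets. The only place your outline is somewhat loose is the last step: the claim that a point of positive $(N-1)$-dimensional upper density for $|D^s f|$ outside $J_f$ must in fact be an approximate jump point requires more than a generic blow-up---one needs the precise structure result (Federer--Vol'pert) that $\mathcal{H}^{N-1}$-a.e.\ point of the measure-theoretic discontinuity set $S_f$ actually lies in $J_f$, together with the fact that $|Df|$ is absolutely continuous with respect to $\mathcal{H}^{N-1}$ on $S_f$. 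Once those ingredients are invoked, the argument closes as you indicate.
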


\end{document}